\newlength{\extramargin}
\newcommand {\Real}{\ensuremath{{\mathbb{R}}}}
\newcommand{\setS}{\ensuremath{\mathcal S}}
\renewcommand{\L}{\ensuremath{\mathcal L}}
\newcommand{\yu}{\ensuremath{{\mathbf{u}}}}
\newcommand{\vi}{\ensuremath{{\mathbf{v}}}}
\newcommand{\ex}{\ensuremath{{\mathbf{x}}}}
\newcommand{\exr}{\ensuremath{{\mathbf{x}}_{\rm r}}}
\newcommand{\er}{\ensuremath{{\mathbf{e}}_{\rm r}}}
\newcommand{\vay}{\ensuremath{{\mathbf{y}}}}
\newcommand{\dabilyu}{\ensuremath{{\mathbf{w}}}}
\newcommand{\Bbig}{\ensuremath{{\mathbf{B}}}}
\newcommand{\Wbig}{\ensuremath{{\mathbf{W}}}}
\newcommand{\Rbig}{\ensuremath{{\mathbf{R}}}}
\newcommand{\Qbig}{\ensuremath{{\mathbf{Q}}}}
\newcommand{\Kbig}{\ensuremath{{\mathbf{K}}}}
\newcommand{\Lbig}{\ensuremath{{\mathbf{L}}}}
\newcommand{\Dbig}{\ensuremath{{\mathbf{D}}}}
\newcommand{\Hbig}{\ensuremath{{\mathbf{H}}}}
\newcommand{\Sbig}{\ensuremath{{\mathbf{S}}}}
\newcommand{\Cbig}{\ensuremath{{\mathbf{C}}}}
\newcommand{\Bbigr}{\ensuremath{{\mathbf{B}}_{\rm r}}}
\newcommand{\Cbigr}{\ensuremath{{\mathbf{C}}_{\rm r}}}
\newcommand{\Lbigr}{\ensuremath{{\mathbf{L}}_{\rm r}}}
\newcommand{\Kbigr}{\ensuremath{{\mathbf{K}}_{\rm r}}}
\newcommand{\Abig}{\ensuremath{{\mathbf{A}}}}
\newcommand{\Abigr}{\ensuremath{{\mathbf{A}}_{\rm r}}}
\newtheorem{theorem}{Theorem}
\newtheorem{lemma}{Lemma}
\newtheorem{definition}{Definition}
\newtheorem{proposition}{Proposition}
\newenvironment{proof}{\noindent {\bf Proof.}}{\hfill \hspace*{1pt}\hfill$\blacksquare$}
\begin{document}
\title{Synchronization of linear systems via relative actuation}
\author{S. Emre Tuna\footnote{The author is with Department of
Electrical and Electronics Engineering, Middle East Technical
University, 06800 Ankara, Turkey. The work has been completed
during his sabbatical stay at Department of Electrical and Electronic
Engineering, The University of Melbourne, Victoria 3010,
Australia. Email: {\tt etuna@metu.edu.tr}}} \maketitle

\begin{abstract}
Synchronization in networks of discrete-time linear time-invariant
systems is considered under relative actuation. Neither input nor
output matrices are assumed to be commensurable. A distributed
algorithm that ensures synchronization via dynamic relative output
feedback is presented.
\end{abstract}

\section{Introduction}

Theory on synchronization in networks of linear systems with
general dynamics has reached a certain maturity over the last
decade; see, for instance,
\cite{scardovi09,seo09,li10,yang11,cao13}. A significant part of
this theory is founded on the following setup. The nominal
individual agent dynamics reads ${\dot x}_{i}=Ax_{i}+Bu_{i}$ with
$y_{i}=Cx_{i}$. And, the signals available for decision are of the
form $z_{i}=\sum_{j} a_{ij}(y_{i}-y_{j})$, where the nonnegative
scalars $a_{ij}$ describe the so-called communication topology.
Within the boundary of this setup different approaches have
yielded various interesting solutions to the synchronization
problem, where the universal goal is to drive the agents' states
$x_{i}(t)$ to a common trajectory. E.g., communication delays are
considered in \cite{zhou14}, $\L_{2}$-gain output-feedback is
employed in \cite{jiao16}, distributed containment problem is
studied in \cite{li13a}. Among many other works contributing to
our wealth of knowledge are \cite{li13b} on adaptive protocols,
\cite{seo12} on switching topologies, and \cite{zhang11} on
optimal state feedback and observer design.

\begin{figure}[h]
\begin{center}
\includegraphics[scale=0.51]{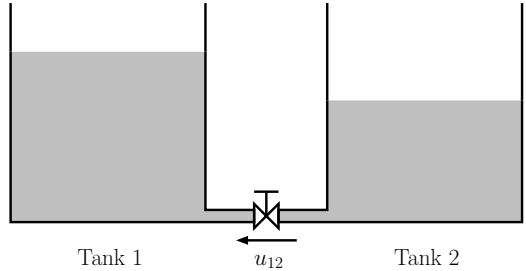}
\caption{Water tanks with a shared actuator
(pump).}\label{fig:reservoirdep}
\end{center}
\end{figure}

Despite their differences the above-mentioned works allow each
agent to have its own independent input $u_{i}$. In this paper we
shed this independence. Instead of each agent having its own input
we look at the case where each input ($u_{ij}$) is shared by a
pair of agents ($i$th and $j$th systems) in the sense displayed in
Fig.~\ref{fig:reservoirdep}. In particular, we consider the agent
dynamics ${x}_{i}^{+}=Ax_{i}+\sum_{j}B_{ij}u_{ij}$ with
$y_{ij}=C_{ij}(x_{i}-x_{j})$, where (i) the actuation is {\em
relative} (i.e., $B_{ij}u_{ij}+B_{ji}u_{ji}=0$) and (ii) the
signals available for decision read $z_{i}=\sum_{j}
C_{ij}^{T}y_{ij}$. In our setup the input matrices $B_{ij}$ are
allowed to be {\em incommensurable} in the sense that there need
not exist a common $B$ satisfying $B_{ij}=a_{ij}B$ with scalar
$a_{ij}$. In fact, two input matrices do not even have to be of
the same size. The same goes for the output matrices $C_{ij}$. The
problem we study here is that of decentralized
stabilization (of the synchronization subspace) by choosing
appropriate inputs $u_{ij}$ based on the relative measurements
$y_{ij}$. As a solution to this problem we construct a distributed
algorithm that achieves synchronization via dynamic relative output
feedback.

Let us now illustrate our setup on an example network. Consider the array of
identical electrical oscillators shown in Fig.~\ref{fig:LCarray3},
where each oscillator (of order $2p$) has $p$ nodes (excepting the
ground node) and the $k$th node of the $i$th oscillator is denoted
by $n_{k}^{(i)}$. The actuation is achieved through current
sources while the measurements are collected through voltmeters.
Each current source/voltmeter connects a pair of nodes (belonging
to two separate oscillators) with the same index number, say
$n_{k}^{(i)}$ and $n_{k}^{(j)}$. It is not difficult to see that
this architecture enjoys the form ${\dot
x}_{i}=Ax_{i}+\sum_{j}B_{ij}u_{ij}$ with
$y_{ij}=C_{ij}(x_{i}-x_{j})$ and, since each current source connects
two nodes with the same index number, the actuation throughout the network
is relative. Furthermore, the input matrices $B_{ij}$ are incommensurable. For
instance, while the current source $u_{32}$ connects $n_{1}^{(3)}$
and $n_{1}^{(2)}$, the current source $u_{12}$ connects
$n_{p}^{(1)}$ and $n_{p}^{(2)}$. Since for these two current
sources the indices (1 and $p$) of the nodes they are associated
to are different, we cannot find a scalar $a$ that satisfies
$B_{12}=aB_{32}$. Likewise, the output matrices $C_{ij}$ too are
incommensurable.

\begin{figure}[h]
\begin{center}
\includegraphics[scale=0.55]{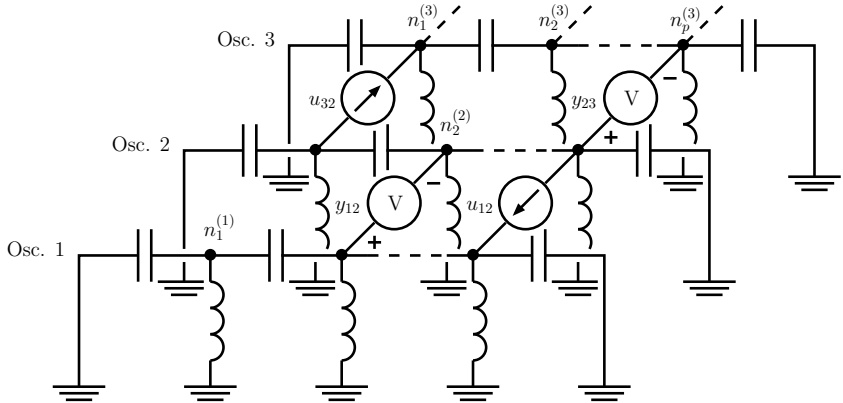}
\caption{Network of electrical oscillators.} \label{fig:LCarray3}
\end{center}
\end{figure}

We begin the remainder of the paper by providing the formal
description of the {\em array} we study. After that we present a
distributed {\em algorithm} that generates control inputs through
dynamic output feedback, followed by our main (and only) theorem,
which states that this algorithm with suitable parameter choice
achieves synchronization. To prove the theorem we first obtain the
explicit expression of the {\em closed-loop} system the array
becomes under the algorithm. Once the righthand side of the closed loop
is computed we proceed to establish {\em stability} and thus complete the
proof of the main result.

\section{Array}

Consider an {\em array} of $q$ discrete-time linear time-invariant
systems
\begin{subeqnarray}\label{eqn:array}
x_{i}^{+}&=&Ax_{i}+\sum_{j=1}^{q}B_{ij}u_{ij}\\
y_{ij}&=&C_{ij}(x_{i}-x_{j})\,;\qquad i,\,j=1,\,2,\,\ldots,\,q
\end{subeqnarray}
where $x_{i}\in\Real^{n}$ is the state of the $i$th system with
$A\in\Real^{n\times n}$, $x_{i}^{+}$ denotes the state at the next
time instant, $u_{ij}=u_{ji}\in\Real^{p_{ij}}$ is the $ij$th input
with $B_{ij}=-B_{ji}\in\Real^{n\times p_{ij}}$, and
$y_{ij}=y_{ji}\in\Real^{m_{ij}}$ is the $ij$th (relative) output
with $C_{ij}=-C_{ji}\in\Real^{m_{ij}\times n}$. We interpret the
equality $u_{ij}=u_{ji}$ as that $u_{ij}$ and $u_{ji}$ are
different notations for the same single variable. Same goes for
the oneness of $y_{ij}$ and $y_{ji}$. Note that we have to have
$B_{ii}=0$ and $C_{ii}=0$. Note also that the actuation is
relative because $B_{ij}u_{ij}+B_{ji}u_{ji}=0$. Hence the average
of the states $x_{\rm av}=q^{-1}\sum x_{i}$ evolves independently
of the inputs driving the array, i.e., we have $x_{\rm
av}^{+}=Ax_{\rm av}$. The ordered collections
$(B_{ij})_{i,j=1}^{q}$ and $(C_{ij})_{i,j=1}^{q}$ are denoted by
$(B_{::})$ and $(C_{::})$, respectively. The value of the solution
of the $i$th system at the $k$th time instant ($k=0,\,1,\,\ldots$)
is denoted by $x_{i}[k]$. The meanings of $u_{ij}[k]$ and
$y_{ij}[k]$ should be clear.

The array~\eqref{eqn:array} gives rise to the following single big
system
\begin{subeqnarray}\label{eqn:bigsystem}
\ex^{+}&=&\Abig\ex+\Bbig\yu\\
\vay&=&\Cbig\ex
\end{subeqnarray}
where $\ex=[x_{1}^{T}\ x_{2}^{T}\ \cdots\ x_{q}^{T}]^{T}$ is the
state, $\yu=[u_{12}^{T}\ u_{13}^{T}\ \cdots\ u_{1q}^{T}\,|\,
u_{23}^{T}\ u_{24}^{T}\ \cdots\
u_{2q}^{T}\,|\,\cdots\,|\,u_{(q-1)q}^{T}]^{T}$ is the input, and
$\vay=[y_{12}^{T}\ y_{13}^{T}\ \cdots\ y_{1q}^{T}\,|\, y_{23}^{T}\
y_{24}^{T}\ \cdots\ y_{2q}^{T}\,|\,\cdots\,|\,y_{(q-1)q}^{T}]^{T}$
is the output. Clearly, we have
\begin{eqnarray*}
\Abig=[I_{q}\otimes A]
\end{eqnarray*}
where $I_{q}\in\Real^{q\times q}$ is the identity matrix (which we
may also denote by $I$ when its dimension is either clear from the
context or immaterial),
\begin{eqnarray*}
\Bbig = {\rm inc}\,(B_{::}):=
\left[\begin{array}{cccc|cccc|c|c}
B_{12}&B_{13}&\cdots&B_{1q}&0&0&\cdots&0&\cdots&0\\
-B_{12}&0&\cdots&0&B_{23}&B_{24}&\cdots&B_{2q}&\cdots&0\\
0&-B_{13}&\cdots&0&-B_{23}&0&\cdots&0&\cdots&0\\
0&0&\cdots&0&0&-B_{24}&\cdots&0&\cdots&0\\
\vdots&\vdots&\ddots&\vdots&\vdots&\vdots&\ddots&\vdots&\ddots&\vdots\\
0&0&\cdots&0&0&0&\cdots&0&\cdots&B_{(q-1)q}\\
0&0&\cdots&-B_{1q}&0&0&\cdots&-B_{2q}&\cdots&-B_{(q-1)q}
\end{array}
\right]
\end{eqnarray*}
and
\begin{eqnarray*}
\Cbig=[{\rm inc}\,(C_{::}^{T})]^{T}\,.
\end{eqnarray*}
The notational choice ``inc'' has to do with that the structure of
$\Bbig$ resembles that of the {\em incidence matrix} of a graph.
Let $\setS_{n}={\rm range}\,[{\mathbf 1}_{q}\otimes I_{n}]$, where
${\mathbf 1}_{q}\in\Real^{q}$ is the vector of all ones. The set
$\setS_{n}\subset (\Real^{n})^{q}$ is called the {\em
synchronization subspace}, whose orthogonal complement, the {\em
disagreement subspace}, is denoted by $\setS_{n}^{\perp}$. Let us
construct the matrices one is all too familiar with
\begin{eqnarray*}
\Wbig_{\rm c}=[\Bbig\ \Abig\Bbig\ \cdots\ \Abig^{n-1}\Bbig]\,,\quad
\Wbig_{\rm o}=\left[\begin{array}{c}\Cbig\\ \Cbig\Abig\\ \vdots\\ \Cbig\Abig^{n-1}\end{array}\right]\,.
\end{eqnarray*}
We have ${\rm range}\,\Wbig_{\rm c}\subset\setS_{n}^{\perp}$ and
${\rm null}\,\Wbig_{\rm c}\supset\setS_{n}$ by construction. Now
we define (relative) controllability and (relative) observability
concerning the array~\eqref{eqn:array}.

\begin{definition}
The array~\eqref{eqn:array} (or the pair $[A,\,(B_{::})]$) is said
to be {\em controllable} if ${\rm range}\,\Wbig_{\rm
c}\supset\setS_{n}^{\perp}$.
\end{definition}

\begin{definition}
The array~\eqref{eqn:array} (or the pair $[(C_{::}),\,A]$) is said
to be {\em observable} if ${\rm null}\,\Wbig_{\rm
o}\subset\setS_{n}$.
\end{definition}
Note that $[A,\,(B_{::})]$ is controllable if and only if
$[(B_{::}^{T}),\,A^{T}]$ is observable. Necessary and sufficient
conditions for controllability and observability (in the above
sense) are reported in \cite{tuna17} and \cite{tuna16},
respectively. Henceforth we assume:
\begin{center}
The array~\eqref{eqn:array} is both controllable and observable.
\end{center}

In the next section we present a distributed synchronization
algorithm that generates input signals $u_{ij}$ for the
array~\eqref{eqn:array} based on the measurements $y_{ij}$. This
algorithm is meant to achieve convergence
$\|x_{i}[k]-x_{j}[k]\|\to 0$ for all pairs $(i,\,j)$ and all
initial conditions.

\section{Algorithm}\label{sec:algorithm}

There are four design parameters to be chosen for the algorithm:
the integers $N_{\rm c},\,N_{\rm o}\geq n$ and the real numbers
$\tau_{\rm c},\,\tau_{\rm o}>0$. The variables employed are
denoted by $\lambda_{i},\,{\hat x}_{i},\,\xi_{i}\in\Real^{n}$, and
$w_{ij}\in(\Real^{p_{ij}})^{N_{\rm c}}$ for
$i,\,j=1,\,2,\,\ldots,\,q$. The variables ${\hat x}_{i}$ are
purely discrete and their values at the $k$th discrete time
instant is denoted by ${\hat x}_{i}[k]$. The remaining variables,
at each $k$, solve certain differential equations, the solutions
of which are denoted by $\lambda_{i}[k,\,t),\,\xi_{i}[k,\,t)$, and
$w_{ij}[k,\,t)$ with $t\in\Real$ being the continuous time
variable. Now, the algorithm generating the control inputs
$u_{ij}[k]$ for the array~\eqref{eqn:array} is as follows.
\begin{subeqnarray}\label{eqn:algorithm}
{\dot w}_{ij}[k,\,t)&=&-w_{ij}[k,\,t)-[B_{ij}\ AB_{ij}\ \cdots\ A^{N_{\rm c}-1}B_{ij}]^{T}(\lambda_{i}[k,\,t)-\lambda_{j}[k,\,t))\slabel{eqn:wij}\\
{\dot \lambda}_{i}[k,\,t)&=&A^{N_{\rm c}}{\hat x}_{i}[k]+\sum_{j=1}^{q}\,[B_{ij}\ AB_{ij}\ \cdots\ A^{N_{\rm c}-1}B_{ij}]w_{ij}[k,\,t)\slabel{eqn:lambdai}\\
{\hat x}_{i}[k+1]&=&A{\hat x}_{i}[k]+A^{N_{\rm o}}\xi_{i}[k,\,\tau_{\rm o})+\sum_{j=1}^{q}B_{ij}u_{ij}[k]\slabel{eqn:xhati}\\
{\dot \xi}_{i}[k,\,t)&=& -\sum_{\ell=0}^{N_{\rm
o}-1}\sum_{j=1}^{q}A^{\ell
T}C_{ij}^{T}C_{ij}A^{\ell}(\xi_{i}[k,\,t)-\xi_{j}[k,\,t))\nonumber\\
&&\qquad+\sum_{j=1}^{q}A^{(N_{\rm
o}-1)T}C_{ij}^{T}(y_{ij}[k]-C_{ij}({\hat x}_{i}[k]-{\hat x}_{j}[k]))\slabel{eqn:xii}\\
u_{ij}[k]&=&[\,\underbrace{[0\ \cdots\ 0\ 1]}_{\displaystyle
N_{\rm c}\,\mbox{terms}}\otimes I_{p_{ij}}]w_{ij}[k,\,\tau_{\rm
c})\slabel{eqn:uij}
\end{subeqnarray}
where, for all $k$, the initial conditions for integrations are
set as
\begin{eqnarray*}
\lambda_{i}[k,\,0)=0\,,\quad
\xi_{i}[k,\,0)=0\,,\quad
w_{ij}[k,\,0)=0\,.
\end{eqnarray*}
As for ${\hat x}_{i}$, the initial conditions ${\hat x}_{i}[0]$
can be chosen arbitrarily. Having described our algorithm, we can
now state what it does. Below is our main result.

\begin{theorem}\label{thm:main}
Consider the array~\eqref{eqn:array} under the control
inputs~\eqref{eqn:uij}. There exist real numbers $\bar\tau_{\rm
c}$ and $\bar\tau_{\rm o}$ such that if $\tau_{\rm
c}>\bar\tau_{\rm c}$ and $\tau_{\rm o}>\bar\tau_{\rm o}$, then the
systems synchronize, i.e., $\|x_{i}[k]-x_{j}[k]\|\to 0$ as
$k\to\infty$ for all pairs $(i,\,j)$ and all initial conditions
$x_{1}[0],\,x_{2}[0],\,\ldots,\,x_{q}[0]$.
\end{theorem}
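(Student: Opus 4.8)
The plan is to collapse the two continuous-time subroutines into static feedback gains, exhibit the resulting closed loop as a small perturbation of a separated estimator/controller pair on the disagreement subspace, and then drive the perturbation to zero by enlarging $\tau_{\rm c},\tau_{\rm o}$. First I would eliminate the internal flows. Stacking the agent variables into $W=(w_{ij})$, $\Lambda=(\lambda_i)$, $\Xi=(\xi_i)$ and writing $\hat\ex=[\hat x_1^T\ \cdots\ \hat x_q^T]^T$, the incidence structure of $\Bbig,\Cbig$ together with $\Abig=I_q\otimes A$ turns \eqref{eqn:wij}--\eqref{eqn:lambdai} and \eqref{eqn:xii} (after a fixed permutation of the blocks of $W$) into the linear flows
\[
\dot W=-W-\mathcal{R}^T\Lambda,\quad \dot\Lambda=\Abig^{N_{\rm c}}\hat\ex[k]+\mathcal{R}W,\qquad \dot\Xi=-\Hbig\Xi+\Abig^{(N_{\rm o}-1)T}\Cbig^T\Cbig(\ex[k]-\hat\ex[k])
\]
with $\mathcal{R}=[\Bbig\ \Abig\Bbig\ \cdots\ \Abig^{N_{\rm c}-1}\Bbig]$ and $\Hbig=\sum_{\ell=0}^{N_{\rm o}-1}\Abig^{\ell T}\Cbig^T\Cbig\Abig^{\ell}$. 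The first pair is exactly the primal--dual (saddle) flow that minimizes $\tfrac12\|W\|^2$ subject to $\mathcal{R}W=-\Abig^{N_{\rm c}}\hat\ex[k]$; the third is a gradient flow for a least-squares correction. Since $\mathrm{range}\,\mathcal{R}=\mathrm{range}\,\Wbig_{\rm c}=\setS_n^\perp$ (controllability, using Cayley--Hamilton as $N_{\rm c}\ge n$) and $\mathrm{null}\,\Hbig=\mathrm{null}\,\Wbig_{\rm o}\subset\setS_n$ (observability), each flow is exponentially contracting on the coordinates that matter; the one non-contracting direction of the first flow lies in $\setS_n$ and never reaches $W$. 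Starting from the prescribed zero data, the values at $t=\tau_{\rm c},\tau_{\rm o}$ equal their equilibria up to remainders of order $e^{-\alpha\tau_{\rm c}},e^{-\beta\tau_{\rm o}}$, so reading off \eqref{eqn:uij} and \eqref{eqn:xhati} gives $\yu[k]=(\Kbig+\Delta_{\rm c}(\tau_{\rm c}))\hat\ex[k]$ and $\Abig^{N_{\rm o}}\Xi[k,\tau_{\rm o})=(\Lbig\Cbig+\Delta_{\rm o}(\tau_{\rm o}))(\ex[k]-\hat\ex[k])$, where $\Kbig=-\Bbig^T\Abig^{(N_{\rm c}-1)T}\Qbig^\dagger\Abig^{N_{\rm c}}$ and $\Lbig=\Abig^{N_{\rm o}}\Hbig^\dagger\Abig^{(N_{\rm o}-1)T}\Cbig^T$ with $\Qbig=\mathcal{R}\mathcal{R}^T$ and $\dagger$ the pseudoinverse, and $\|\Delta_{\rm c}\|,\|\Delta_{\rm o}\|\to0$ as the horizons grow.

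With these identities the array \eqref{eqn:array} driven by \eqref{eqn:uij}, together with \eqref{eqn:xhati}, becomes a linear time-invariant recursion in $(\hat\ex,\ex-\hat\ex)$. Because relative actuation forces $\Bbig\yu\in\setS_n^\perp$ and $\Kbig$ annihilates $\setS_n$ (the range of $\Qbig^\dagger$ is $\setS_n^\perp$), the average $x_{\rm av}$ decouples and only the $\setS_n^\perp$-projections matter for synchronization. Writing $\zi$ for the disagreement part of $\hat\ex$ and $\er$ for that of $\ex-\hat\ex$, the nominal ($\Delta_{\rm c}=\Delta_{\rm o}=0$) closed loop, restricted to $\setS_n^\perp$, is block upper triangular,
\[
\begin{bmatrix}\zi\\ \er\end{bmatrix}[k+1]=\begin{bmatrix}\Abig+\Bbig\Kbig & \Lbig\Cbig\\ 0 & \Abig-\Lbig\Cbig\end{bmatrix}\begin{bmatrix}\zi\\ \er\end{bmatrix}[k]
\]
which is the usual estimator/controller separation.

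The heart of the argument is to show that the controller block $\Abig+\Bbig\Kbig$ is Schur on $\setS_n^\perp$. I would use the value function $V(\zi)=\zi^T\Abig^{N_{\rm c}T}\Qbig^\dagger\Abig^{N_{\rm c}}\zi$, the squared norm of the minimum-norm $N_{\rm c}$-step input steering $\zi$ into $\setS_n$; feasibility for every $\zi$ is precisely controllability. Applying only the first move $\yu[k]=\Kbig\zi$ and padding the optimal tail with a trailing zero yields an admissible $N_{\rm c}$-step input for $(\Abig+\Bbig\Kbig)\zi$, whence $V((\Abig+\Bbig\Kbig)\zi)\le V(\zi)-\|\Kbig\zi\|^2$. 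Telescoping gives $\sum_k\|\Kbig\zi[k]\|^2<\infty$, so $\Kbig\zi[k]\to0$ and $\Abig^{N_{\rm c}}\zi[k]\to0$, and the reachability identity $\zi[k+N_{\rm c}]=\Abig^{N_{\rm c}}\zi[k]+\sum_{j=0}^{N_{\rm c}-1}\Abig^{N_{\rm c}-1-j}\Bbig\Kbig\zi[k+j]$ forces $\zi[k]\to0$ for every initial condition; for a linear map this global attractivity is equivalent to $\Abig+\Bbig\Kbig$ being Schur on $\setS_n^\perp$. The estimator block follows by duality: since $[A,(B_{::})]$ controllable $\Leftrightarrow[(B_{::}^T),A^T]$ observable, the pair $(\Abig^T,\Cbig^T)$ satisfies the controllability hypothesis, $\Lbig$ is exactly the controller gain constructed for it, and $(\Abig-\Lbig\Cbig)^T=\Abig^T-\Cbig^T\Lbig^T$ is Schur on $\setS_n^\perp$ by the same argument — transposition preserving the spectrum on the orthogonally invariant $\setS_n^\perp$. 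I expect this monotonicity/telescoping step, together with the bookkeeping needed when $V$ is only positive semidefinite (i.e. when $A$ has eigenvalues at the origin), to be the most delicate part.

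Finally, being block triangular with both diagonal blocks Schur, the nominal closed loop has spectral radius strictly below one on $\setS_n^\perp$. Eigenvalues vary continuously with the matrix entries, and $\Delta_{\rm c}(\tau_{\rm c}),\Delta_{\rm o}(\tau_{\rm o})\to0$, so there exist thresholds $\bar\tau_{\rm c},\bar\tau_{\rm o}$ beyond which the perturbed matrix remains Schur on $\setS_n^\perp$. For $\tau_{\rm c}>\bar\tau_{\rm c}$ and $\tau_{\rm o}>\bar\tau_{\rm o}$ we then have $\zi[k]\to0$ and $\er[k]\to0$, hence the plant disagreement $\zi[k]+\er[k]\to0$; that is, $\|x_i[k]-x_j[k]\|\to0$ for all pairs $(i,j)$ and all initial conditions, which is the claim.
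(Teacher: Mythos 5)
Your architecture coincides with the paper's: collapse the two continuous-time subroutines into their equilibrium gains plus remainders that vanish as the integration horizons grow (this is the content of the paper's Lemmas~\ref{lem:K} and~\ref{lem:L}, with the perturbations $\theta_{\rm c}(\tau_{\rm c})$ and $\theta_{\rm o}(\tau_{\rm o})$ playing the role of your $\Delta_{\rm c},\Delta_{\rm o}$); restrict to the disagreement subspace, where the closed loop is block upper triangular as in~\eqref{eqn:reduced}; identify the nominal diagonal blocks as Kleinman-type minimum-energy/deadbeat gains; and finish by continuity of eigenvalues under small perturbations. Your limiting gains agree with the paper's $\Hbig_{\rm c}$ and $\Hbig_{\rm o}$ after the reduction by $\Dbig$, your observer block follows by the same duality the paper uses, and your remark that the non-contracting direction of the saddle flow lies in $\setS_{n}$ and ``never reaches $W$'' is exactly what the paper formalizes in Lemmas~\ref{lem:convenient} and~\ref{lem:hurwitz} (note that the exponential contraction you invoke is itself a consequence of controllability, via the full row rank of $\Dbig^{T}\Rbig$).

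The one place you depart from the paper is also where the gap sits. The paper concludes that $\Hbig_{\rm c}$ and $\Hbig_{\rm o}$ are Schur by citing Kleinman's result (Proposition~\ref{prop:key}); you instead attempt an inline proof via the value function $V(\zi)=\zi^{T}\Abig^{N_{\rm c}T}(\mathcal{R}\mathcal{R}^{T})^{\dagger}\Abig^{N_{\rm c}}\zi$. The decrease $V(\zi^{+})\le V(\zi)-\|\Kbig\zi\|^{2}$ obtained by padding the optimal tail with a zero is correct, and it does give $\sum_{k}\|\Kbig\zi[k]\|^{2}<\infty$, hence $\Kbig\zi[k]\to 0$. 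But the next assertion, $\Abig^{N_{\rm c}}\zi[k]\to 0$, does not follow: the telescoping only shows that $V(\zi[k])$ converges to some limit $c\ge 0$, not that $c=0$, so the reachability identity you then invoke cannot be closed. Ruling out $c>0$ amounts to excluding closed-loop eigenvalues $\mu$ with $|\mu|=1$ whose eigenvectors are annihilated by $\Kbig$ (the case $|\mu|>1$ is easy since such an eigenvector $v$ satisfies $\Kbig v=0$, hence $\Abigr v=\mu v$ and $V(v)=0$ forces $\Abigr^{N_{\rm c}}v=\mu^{N_{\rm c}}v=0$, a contradiction; the unit-circle case is the genuinely delicate one, especially when $A$ is singular, and is precisely what the paper's citation of Kleinman and of the footnote reference covers). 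You flag this as ``the most delicate part'' but do not supply the argument, so as written the chain $\Kbig\zi[k]\to 0\Rightarrow\Abig^{N_{\rm c}}\zi[k]\to 0\Rightarrow\zi[k]\to 0$ is incomplete. Either cite the Kleinman-type stability result as the paper does, or complete the unit-circle eigenvector exclusion; everything else in your proposal matches the paper's proof and is sound.
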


In the remainder of the paper we construct the proof of
Theorem~\ref{thm:main}. To this end, we first obtain the
discrete-time closed-loop dynamics explicitly. Then we study its
stability.

\section{Closed loop}\label{sec:closedloop}

In this section we compute the closed-loop dynamics governing the
system~\eqref{eqn:bigsystem} under the
algorithm~\eqref{eqn:algorithm}. Namely, we obtain explicit
expressions for the matrices $\Kbig$ and $\Lbig$ which should
appear as
\begin{subeqnarray}\label{eqn:bigclosedloop}
\ex^{+}&=&\Abig\ex-\Bbig\Kbig\hat\ex\slabel{eqn:bigclosedloopA}\\
\hat\ex^{+}&=&\Abig\hat\ex-\Bbig\Kbig\hat\ex+{\mathbf
L}(\vay-\Cbig\hat\ex)\slabel{eqn:bigclosedloopB}
\end{subeqnarray}
where $\hat\ex=[\hat x_{1}^{T}\ \hat x_{2}^{T}\ \cdots\ \hat
x_{q}^{T}]^{T}$ and $\hat x_{i}$ are updated via
\eqref{eqn:xhati}. We begin with $\Kbig$.

\subsection{Gain $\Kbig$}

We denote by $e_{N_{\rm c}}\in\Real^{N_{\rm c}}$ the unit vector
whose last entry is one, i.e., $e_{N_{\rm c}}=[0\ \cdots\ 0\
1]^{T}$. Recall that the vector ${\bf 1}_{q}$ spans the
synchronization subspace $\setS_{1}$. Let $S$ denote its
normalization, i.e., $S={\bf 1}_{q}/\sqrt{q}$ and hence
$S^{T}S=1$. Also, let $D\in\Real^{q\times (q-1)}$ be some matrix
whose columns make an orthonormal basis for $\setS_{1}^{\perp}$.
Note that $D^{T}D=I_{q-1}$ and the columns of the matrix $[D\ S]$
make an orthonormal basis for $\Real^{n}$. We let $\Dbig=[D\otimes
I_{n}]$ and $\Sbig=[S\otimes I_{n}]$. The following identities are
easy to show and find use in the sequel.
\begin{enumerate}
\item[(i)]$\Dbig\Dbig^{T}+\Sbig\Sbig^{T}=I_{qn}$. \item[(ii)]
${\rm range}\,[D\otimes I_{n}]=\setS_{n}^{\perp}$. \item[(iii)]
$[S^{T}\otimes I_{n}]\Bbig=0$. \item[(iv)] $\Cbig[S\otimes
I_{n}]=0$.
\end{enumerate}

Recall that $w_{ij}\in(\Real^{p_{ij}})^{N_{\rm c}}$ are the variables
in \eqref{eqn:wij}. Note that $B_{ij}=-B_{ji}$ yields
$\dot{w}_{ij}+w_{ij}=\dot{w}_{ji}+w_{ji}$. Then
$w_{ij}[k,\,0)=w_{ji}[k,\,0)$ implies $w_{ij}[k,\,t)\equiv
w_{ji}[k,\,t)$. This allows us to consider in our analysis only
$w_{ij}$ with $i<j$. Now, let us partition $w_{ij}$ as
$w_{ij}=[w_{ij}^{[N_{\rm c}-1]T}\ w_{ij}^{[N_{\rm c}-2]T}\
\cdots\ w_{ij}^{[0]T}]^{T}$ with
$w_{ij}^{[\ell]}\in\Real^{p_{ij}}$. Then gather $w_{ij}^{[\ell]}$
as $\dabilyu^{[\ell]}=[w_{12}^{[\ell]T}\ w_{13}^{[\ell]T}\ \cdots\
w_{1q}^{[\ell]T}\,|\, w_{23}^{[\ell]T}\ w_{24}^{[\ell]T}\ \cdots\
w_{2q}^{[\ell]T}\,|\,\cdots\,|\,w_{(q-1)q}^{[\ell]T}]^{T}$ to
construct $\dabilyu=[\dabilyu^{[N_{\rm c}-1]T}\ \dabilyu^{[N_{\rm
c}-2]T}\ \cdots\ \dabilyu^{[0]T}]^{T}$. Also, let
$\lambda=[\lambda_{1}^{T}\ \lambda_{2}^{T}\ \cdots\
\lambda_{q}^{T}]^{T}$ where $\lambda_{i}\in\Real^{n}$ are the
variables in \eqref{eqn:lambdai}. Finally define
\begin{eqnarray*}
\Rbig=[\Bbig\ \Abig\Bbig\ \cdots\ \Abig^{N_{\rm c}-1}\Bbig]\,.
\end{eqnarray*}
This new set of notation allows us to put the
dynamics~\eqref{eqn:wij}-\eqref{eqn:lambdai} into the following
compact form
\begin{eqnarray}\label{eqn:compact}
\left[\begin{array}{c} \dot\dabilyu[k,\,t)\\
\dot\lambda[k,\,t)
\end{array}\right]=\left[\begin{array}{cc}
-I&-\Rbig^{T}\\
\Rbig & 0
\end{array}\right]\left[\begin{array}{c} \dabilyu[k,\,t)\\
\lambda[k,\,t)
\end{array}\right]+\left[\begin{array}{c} 0\\
\Abig^{N_{\rm c}}\hat\ex[k]
\end{array}\right]\,,\quad\dabilyu[k,\,0)=0\,,\quad
\lambda[k,\,0)=0\,.
\end{eqnarray}
Solving \eqref{eqn:compact} allows us to obtain the inputs
generated by the algorithm~\eqref{eqn:algorithm} because
$\yu[k]=\dabilyu^{[0]}[k,\,\tau_{\rm c})$. Since we are not
interested in the solution $\lambda[k,\,t)$ let us consider
another differential equation, which in certain ways is more
convenient:
\begin{eqnarray}\label{eqn:convenient}
\left[\begin{array}{c} \dot\vi[k,\,t)\\
\dot\mu[k,\,t)
\end{array}\right]=\underbrace{\left[\begin{array}{cc}
-I&-\Rbig^{T}\Dbig\\
\Dbig^{T}\Rbig & 0
\end{array}\right]}_{\displaystyle \Lambda}\left[\begin{array}{c} \vi[k,\,t)\\
\mu[k,\,t)
\end{array}\right]+\left[\begin{array}{c} 0\\
\Dbig^{T}\Abig^{N_{\rm c}}\hat\ex[k]
\end{array}\right]\,,\quad\vi[k,\,0)=0\,,\quad
\mu[k,\,0)=0
\end{eqnarray}
where the size of the vector $\vi$ is same as that of $\dabilyu$
and the vector $\mu$ is of appropriate size. We now make a
succession of simple observations that eventually lead us to an
explicit expression for the gain $\Kbig$.

\begin{lemma}\label{lem:initial}
We have $\Dbig\Dbig^{T}\Abig^{\ell}\Bbig=\Abig^{\ell}\Bbig$ for
any integer $\ell\geq 0$.
\end{lemma}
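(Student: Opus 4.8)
The plan is to reduce the claim to the orthogonality identity (iii). By identity (i) we have $\Dbig\Dbig^{T}=I_{qn}-\Sbig\Sbig^{T}$, so establishing $\Dbig\Dbig^{T}\Abig^{\ell}\Bbig=\Abig^{\ell}\Bbig$ is equivalent to establishing $\Sbig\Sbig^{T}\Abig^{\ell}\Bbig=0$. Since $\Sbig=[S\otimes I_{n}]$ has full column rank, it in fact suffices to show $\Sbig^{T}\Abig^{\ell}\Bbig=0$.

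The key step is an intertwining relation between $\Sbig^{T}$ and $\Abig$. Writing $\Sbig^{T}=S^{T}\otimes I_{n}$ and $\Abig=I_{q}\otimes A$ and applying the mixed-product rule for Kronecker products, $\Sbig^{T}\Abig=(S^{T}\otimes I_{n})(I_{q}\otimes A)=S^{T}\otimes A=A(S^{T}\otimes I_{n})=A\Sbig^{T}$. Iterating this relation gives $\Sbig^{T}\Abig^{\ell}=A^{\ell}\Sbig^{T}$ for every integer $\ell\geq 0$. Hence $\Sbig^{T}\Abig^{\ell}\Bbig=A^{\ell}\Sbig^{T}\Bbig=A^{\ell}(S^{T}\otimes I_{n})\Bbig=0$, where the final equality is precisely identity (iii). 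This closes the reduction and proves the lemma.

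Equivalently, one can argue geometrically: identity (ii) together with $\Dbig^{T}\Dbig=I_{(q-1)n}$ (inherited from $D^{T}D=I_{q-1}$) shows that $\Dbig\Dbig^{T}$ is the orthogonal projector onto the disagreement subspace $\setS_{n}^{\perp}$, while each $\Abig^{\ell}\Bbig$ is a block of $\Wbig_{\rm c}$ and therefore has range contained in $\setS_{n}^{\perp}$ by construction; a projector restricts to the identity on its own range, which yields the claim immediately. I expect no substantive obstacle here: the only point needing care is bookkeeping the Kronecker dimensions so that the mixed-product rule is applied with correctly matched factors, and in particular verifying that $S^{T}\otimes A$ equals $A(S^{T}\otimes I_{n})$ and not $(S^{T}\otimes I_{n})A$.
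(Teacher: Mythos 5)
Your proof is correct and follows essentially the same route as the paper: both reduce the claim via the decomposition $\Dbig\Dbig^{T}+\Sbig\Sbig^{T}=I$ to showing $\Sbig^{T}\Abig^{\ell}\Bbig=0$, which both obtain from the Kronecker intertwining $\Sbig^{T}\Abig^{\ell}=A^{\ell}\Sbig^{T}$ together with identity (iii). The supplementary geometric remark about $\Dbig\Dbig^{T}$ being the orthogonal projector onto $\setS_{n}^{\perp}$ is a valid alternative view but does not change the substance.
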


\begin{proof}
Observe that
\begin{eqnarray*}
\Sbig^{T}\Abig^{\ell}\Bbig
&=&[S^{T}\otimes I_{n}][I_{q}\otimes A]^{\ell}\Bbig\\
&=&[S^{T}\otimes I_{n}][I_{q}\otimes A^{\ell}]\Bbig\\
&=&A^{\ell}\underbrace{[S^{T}\otimes I_{n}]\Bbig}_{0}\\
&=&0\,.
\end{eqnarray*}
Therefore we can write
\begin{eqnarray*}
\Dbig\Dbig^{T}\Abig^{\ell}\Bbig
&=&\Dbig\Dbig^{T}\Abig^{\ell}\Bbig+\Sbig(\Sbig^{T}\Abig^{\ell}\Bbig)\\
&=&(\underbrace{\Dbig\Dbig^{T}+\Sbig\Sbig^{T}}_{I})\Abig^{\ell}\Bbig\\
&=&\Abig^{\ell}\Bbig\,.
\end{eqnarray*}
Hence the result.
\end{proof}

\begin{lemma}\label{lem:convenient}
Consider the differential equations \eqref{eqn:compact} and
\eqref{eqn:convenient}. We have $\dabilyu[k,\,t)=\vi[k,\,t)$.
\end{lemma}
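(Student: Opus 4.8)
The plan is to exhibit a solution of the convenient differential equation~\eqref{eqn:convenient} built directly from the solution of~\eqref{eqn:compact}, and then invoke uniqueness. Concretely, I would guess the substitution $\mu=\Dbig^{T}\lambda$, i.e.\ identify the auxiliary variable $\mu$ of~\eqref{eqn:convenient} with the projection of the $\lambda$-variable of~\eqref{eqn:compact} onto the disagreement coordinates, and show that the pair $(\dabilyu,\,\Dbig^{T}\lambda)$ solves~\eqref{eqn:convenient} with the correct initial data. Since both right-hand sides are linear in the state, uniqueness of solutions of a linear ODE then forces $\vi\equiv\dabilyu$.

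The single algebraic fact that makes the substitution work is obtained from Lemma~\ref{lem:initial}. Transposing $\Dbig\Dbig^{T}\Abig^{\ell}\Bbig=\Abig^{\ell}\Bbig$ (and using that $\Dbig\Dbig^{T}$ is symmetric) gives $(\Abig^{\ell}\Bbig)^{T}\Dbig\Dbig^{T}=(\Abig^{\ell}\Bbig)^{T}$ for every $\ell\geq 0$; stacking these blocks over $\ell=0,1,\ldots,N_{\rm c}-1$ yields
\begin{eqnarray*}
\Rbig^{T}\Dbig\Dbig^{T}=\Rbig^{T}\,.
\end{eqnarray*}
Equivalently, by identities (i) and (iii), $\Rbig^{T}\Sbig=0$, so the projection $\Dbig\Dbig^{T}$ acts as the identity when applied against $\Rbig^{T}$.

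With this in hand I would verify the two rows of~\eqref{eqn:convenient} for the candidate $(\dabilyu,\,\Dbig^{T}\lambda)$. For the first row, differentiating $\dabilyu$ via~\eqref{eqn:compact} and inserting $\Rbig^{T}=\Rbig^{T}\Dbig\Dbig^{T}$ gives
\begin{eqnarray*}
\dot\dabilyu=-\dabilyu-\Rbig^{T}\lambda=-\dabilyu-\Rbig^{T}\Dbig(\Dbig^{T}\lambda)\,,
\end{eqnarray*}
which is exactly the first row of~\eqref{eqn:convenient} with $\vi=\dabilyu$ and $\mu=\Dbig^{T}\lambda$. For the second row, multiplying $\dot\lambda$ from~\eqref{eqn:compact} on the left by $\Dbig^{T}$ gives
\begin{eqnarray*}
\frac{d}{dt}(\Dbig^{T}\lambda)=\Dbig^{T}\Rbig\,\dabilyu+\Dbig^{T}\Abig^{N_{\rm c}}\hat\ex[k]\,,
\end{eqnarray*}
which coincides with the second row of~\eqref{eqn:convenient}. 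The initial conditions also match, since $\dabilyu[k,0)=0=\vi[k,0)$ and $\Dbig^{T}\lambda[k,0)=0=\mu[k,0)$.

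I do not expect a genuine obstacle here: the verification is mechanical once the substitution is fixed. The only non-routine step is spotting $\mu=\Dbig^{T}\lambda$ together with the realization that Lemma~\ref{lem:initial} is precisely the ingredient needed to absorb the projector $\Dbig\Dbig^{T}$ appearing in the off-diagonal block of $\Lambda$. Given that and the uniqueness of solutions to the linear (hence Lipschitz) system~\eqref{eqn:convenient} for each fixed $k$, I would conclude $(\vi,\mu)=(\dabilyu,\Dbig^{T}\lambda)$ and in particular $\dabilyu[k,t)=\vi[k,t)$, as claimed.
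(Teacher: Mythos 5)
Your proof is correct, and it reaches the conclusion by a genuinely different (though equally elementary) route than the paper. Both arguments turn on the same single identity $\Rbig^{T}\Dbig\Dbig^{T}=\Rbig^{T}$, obtained by transposing Lemma~\ref{lem:initial}, and both finish with uniqueness of solutions of a linear ODE. The difference is in how the auxiliary variables are handled: the paper eliminates $\lambda$ and $\mu$ altogether by differentiating the first row of each system once more, so that $\dabilyu$ and $\vi$ are seen to satisfy one and the same second-order equation $\ddot\dabilyu+\dot\dabilyu+\Rbig^{T}\Rbig\dabilyu+\Rbig^{T}\Abig^{N_{\rm c}}\hat\ex[k]=0$ with identical zero initial data for the state and its derivative. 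You instead keep both systems first order and exhibit the explicit correspondence $(\vi,\mu)=(\dabilyu,\Dbig^{T}\lambda)$, verifying the two rows of \eqref{eqn:convenient} directly. Your route avoids the second differentiation and the attendant bookkeeping of the extra initial condition $\dot\dabilyu[k,0)=0$, and it yields the bonus identification $\mu=\Dbig^{T}\lambda$ of the auxiliary variable, which the paper's elimination discards (consistent with its stated disinterest in the solution $\lambda$); the paper's route, for its part, isolates the variable of interest completely. Either way the verification is, as you say, mechanical once the key identity is in hand, and your side remark that the identity is equivalent to $\Rbig^{T}\Sbig=0$ via identities (i) and (iii) is also correct.
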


\begin{proof}
Consider \eqref{eqn:compact}. We can write
\begin{eqnarray*}
\ddot\dabilyu &=& -\dot\dabilyu-\Rbig^{T}\dot\lambda\\
&=&-\dot\dabilyu-\Rbig^{T}(\Rbig\dabilyu+\Abig^{N_{\rm
c}}\hat\ex)\,.
\end{eqnarray*}
Also,
$\dot\dabilyu[k,\,0)=-\dabilyu[k,\,0)-\Rbig^{T}\lambda[k,\,0)=0$.
Hence the solution $t\mapsto\dabilyu[k,\,t)$ should satisfy
\begin{eqnarray}\label{eqn:component1}
{\ddot\dabilyu}[k,\,t)+{\dot\dabilyu}[k,\,t)+\Rbig^{T}\Rbig\dabilyu[k,\,t)+\Rbig^{T}\Abig^{N_{\rm
c}} \hat\ex[k]=0\,,\quad
\dabilyu[k,\,0)=0\,,\quad{\dot\dabilyu}[k,\,0)=0\,.
\end{eqnarray}
Similarly, \eqref{eqn:convenient} implies
\begin{eqnarray}\label{eqn:component2}
{\ddot\vi}[k,\,t)+{\dot\vi}[k,\,t)+\Rbig^{T}\Dbig\Dbig^{T}\Rbig\vi[k,\,t)+\Rbig^{T}\Dbig\Dbig^{T}\Abig^{N_{\rm
c}} \hat\ex[k]=0\,,\quad \vi[k,\,0)=0\,,\quad{\dot\vi}[k,\,0)=0\,.
\end{eqnarray}
Lemma~\ref{lem:initial} allows us to write
\begin{eqnarray}\label{eqn:component3}
\Rbig^{T}\Dbig\Dbig^{T} &=& \left(\Dbig\Dbig^{T}[\Bbig\
\Abig\Bbig\ \cdots\ \Abig^{N_{\rm
c}-1}\Bbig]\right)^{T}\nonumber\\
&=& [\Bbig\ \Abig\Bbig\ \cdots\ \Abig^{N_{\rm
c}-1}\Bbig]^{T}\nonumber\\
&=& \Rbig^{T}\,.
\end{eqnarray}
Combining \eqref{eqn:component1}, \eqref{eqn:component2}, and
\eqref{eqn:component3} yields the result.
\end{proof}

\begin{lemma}\label{lem:fullrow}
The matrix $\Dbig^{T}\Rbig$ is full row rank.
\end{lemma}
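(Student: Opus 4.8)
The plan is to show that $\Dbig^{T}\Rbig$ maps onto $\Real^{(q-1)n}$, which is its number of rows; full row rank is exactly this surjectivity. The key is first to pin down the range of $\Rbig$ precisely, and then to observe that $\Dbig^{T}$ carries that range bijectively onto the whole space $\Real^{(q-1)n}$.

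First I would establish that ${\rm range}\,\Rbig=\setS_{n}^{\perp}$. For the inclusion ${\rm range}\,\Rbig\subseteq\setS_{n}^{\perp}$, recall from the computation inside the proof of Lemma~\ref{lem:initial} that $\Sbig^{T}\Abig^{\ell}\Bbig=0$ for every integer $\ell\geq 0$; since the columns of each block $\Abig^{\ell}\Bbig$ are thereby orthogonal to ${\rm range}\,\Sbig=\setS_{n}$, they lie in $\setS_{n}^{\perp}$, and hence so do those of $\Rbig=[\Bbig\ \Abig\Bbig\ \cdots\ \Abig^{N_{\rm c}-1}\Bbig]$. For the reverse inclusion I would use $N_{\rm c}\geq n$: since $N_{\rm c}-1\geq n-1$, every column of $\Wbig_{\rm c}=[\Bbig\ \Abig\Bbig\ \cdots\ \Abig^{n-1}\Bbig]$ appears among the columns of $\Rbig$, so ${\rm range}\,\Rbig\supseteq{\rm range}\,\Wbig_{\rm c}$. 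The standing controllability assumption gives ${\rm range}\,\Wbig_{\rm c}\supseteq\setS_{n}^{\perp}$, whence ${\rm range}\,\Rbig\supseteq\setS_{n}^{\perp}$ as well. The two inclusions yield ${\rm range}\,\Rbig=\setS_{n}^{\perp}$.

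Next I would exploit that $\Dbig=[D\otimes I_{n}]$ is an isometry onto the disagreement subspace. Because the columns of $D$ are orthonormal, $\Dbig^{T}\Dbig=I_{(q-1)n}$, and by identity~(ii) we have ${\rm range}\,\Dbig=\setS_{n}^{\perp}$. Hence for any $v\in\setS_{n}^{\perp}$ there is a $u\in\Real^{(q-1)n}$ with $\Dbig u=v$, and then $\Dbig^{T}v=\Dbig^{T}\Dbig u=u$; this shows $\Dbig^{T}$ maps $\setS_{n}^{\perp}$ onto all of $\Real^{(q-1)n}$. Combining with the previous step,
\begin{eqnarray*}
{\rm range}\,(\Dbig^{T}\Rbig)=\Dbig^{T}({\rm range}\,\Rbig)=\Dbig^{T}(\setS_{n}^{\perp})=\Real^{(q-1)n}\,,
\end{eqnarray*}
so $\Dbig^{T}\Rbig$ is full row rank.

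I do not anticipate any serious obstacle; the argument is essentially a bookkeeping of ranges. The only point demanding a little care is the inclusion ${\rm range}\,\Rbig\subseteq\setS_{n}^{\perp}$, which I would handle by reusing the identity $\Sbig^{T}\Abig^{\ell}\Bbig=0$ already verified within Lemma~\ref{lem:initial} rather than re-deriving it. The controllability hypothesis is doing the essential work in the opposite inclusion, and the condition $N_{\rm c}\geq n$ is exactly what lets $\Rbig$ inherit the full controllable range of $\Wbig_{\rm c}$.
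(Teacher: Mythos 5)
Your proof is correct and rests on exactly the same ingredients as the paper's: controllability of the array, the inclusion ${\rm range}\,\Rbig\supseteq{\rm range}\,\Wbig_{\rm c}$ coming from $N_{\rm c}\geq n$, and the fact that $\Dbig$ is an isometry whose range is $\setS_{n}^{\perp}$. The only difference is presentational—the paper argues by contradiction through null spaces (a nonzero $\eta$ with $\eta^{T}\Dbig^{T}\Rbig=0$ would give $\zeta=\Dbig\eta\in\setS_{n}^{\perp}\cap{\rm null}\,\Rbig^{T}$, $\zeta\neq 0$, contradicting controllability), while you compute ${\rm range}\,(\Dbig^{T}\Rbig)$ directly; this is the same argument read in the dual direction.
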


\begin{proof}
Suppose not. Then we can find a nonzero vector
$\eta\in(\Real^{n})^{q-1}$ satisfying $\eta^{T}\Dbig^{T}\Rbig=0$.
Let $\zeta=\Dbig\eta$, which belongs to $\setS_{n}^{\perp}$ due to
${\rm range}\,[D\otimes I_{n}]=\setS_{n}^{\perp}$. Also,
$\zeta\neq 0$ because $\eta\neq 0$ and $\Dbig$ is full column
rank. Thence $\zeta\notin\setS_{n}$. This implies ${\rm
null}\,\Rbig^{T}\not\subset\setS_{n}$ due to $\Rbig^{T}\zeta=0$.
Hence ${\rm range}\,\Rbig\not\supset\setS_{n}^{\perp}$.
Consequently ${\rm range}\,\Wbig_{\rm
c}\not\supset\setS_{n}^{\perp}$ because ${\rm
range}\,\Rbig\supset{\rm range}\,\Wbig_{\rm c}$ thanks to $N_{\rm
c}\geq n$. But ${\rm range}\,\Wbig_{\rm
c}\not\supset\setS_{n}^{\perp}$ contradicts that the
array~\eqref{eqn:array} is controllable.
\end{proof}

\begin{lemma}\label{lem:hurwitz}
The matrix $\Lambda$ defined in \eqref{eqn:convenient} is Hurwitz,
i.e, all its eigenvalues are on the open left half-plane.
\end{lemma}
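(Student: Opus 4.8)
The plan is to exploit the structure of $\Lambda$, which is a skew-symmetric matrix perturbed by damping that acts only on the first block. Writing $M:=\Dbig^{T}\Rbig$ for brevity, the matrix reads $\Lambda=\left[\begin{array}{cc}-I&-M^{T}\\ M&0\end{array}\right]$, so its off-diagonal part $\left[\begin{array}{cc}0&-M^{T}\\ M&0\end{array}\right]$ is skew-symmetric while the diagonal part $\mathrm{diag}(-I,\,0)$ is negative semidefinite. This is precisely the setting in which one expects asymptotic stability, provided the damping ``sees'' every mode --- a property I will extract from Lemma~\ref{lem:fullrow}. As a first step I would establish mere (non-strict) stability using the energy $V=\|\vi\|^{2}+\|\mu\|^{2}$. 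Along the homogeneous dynamics associated with \eqref{eqn:convenient}, namely $\dot{\vi}=-\vi-M^{T}\mu$ and $\dot{\mu}=M\vi$, one computes $\dot V=-2\|\vi\|^{2}-2\vi^{T}M^{T}\mu+2\mu^{T}M\vi=-2\|\vi\|^{2}$, since the two cross terms are equal scalars and cancel. Hence $V$ is nonincreasing and $\Lambda$ can have no eigenvalue in the open right half-plane (the constant forcing term of \eqref{eqn:convenient} is irrelevant to the spectrum of $\Lambda$).

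The crux is then to rule out eigenvalues on the imaginary axis, and I would do this by a direct spectral computation. Let $s$ be an eigenvalue of $\Lambda$ with eigenvector $(a^{T}\ b^{T})^{T}\neq 0$, so that $-a-M^{T}b=sa$ and $Ma=sb$. Forming $v^{*}\Lambda v$ with $v=(a^{T}\ b^{T})^{T}$, the off-diagonal contribution $-a^{*}M^{T}b+b^{*}Ma$ consists of two terms that are complex conjugates of one another (because $M$ is real), hence their sum is purely imaginary; taking real parts gives $\mathrm{Re}(s)\,(\|a\|^{2}+\|b\|^{2})=-\|a\|^{2}$. Consequently $\mathrm{Re}(s)\le 0$, and if $\mathrm{Re}(s)=0$ then $a=0$, whereupon the eigenrelations collapse to $M^{T}b=0$ with $b\neq 0$. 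But Lemma~\ref{lem:fullrow} asserts that $M=\Dbig^{T}\Rbig$ is full row rank, so $M^{T}$ is injective and $b=0$, a contradiction. Therefore every eigenvalue obeys $\mathrm{Re}(s)<0$ and $\Lambda$ is Hurwitz.

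The step I expect to carry the weight is the last one. All dissipation in $\dot V$ flows through $\vi$ alone, so strict stability hinges entirely on showing that no nonzero mode can persist with $\vi\equiv 0$; this is exactly where the full row rank of $\Dbig^{T}\Rbig$ --- equivalently, the relative controllability of the array delivered through Lemma~\ref{lem:fullrow} --- is indispensable. The same conclusion could be reached via LaSalle's invariance principle applied to $V$: on the largest invariant set contained in $\{\vi=0\}$ one has $\dot{\vi}\equiv 0$, forcing $M^{T}\mu=0$ and hence $\mu=0$, so that the set reduces to the origin. I would present the spectral version since it yields the strict inequality on $\mathrm{Re}(s)$ most transparently.
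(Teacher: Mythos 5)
Your proof is correct and follows the same strategy as the paper's: the negative semidefiniteness of $\Lambda+\Lambda^{T}$ rules out eigenvalues in the open right half-plane, and the full row rank of $\Dbig^{T}\Rbig$ from Lemma~\ref{lem:fullrow} eliminates the imaginary axis. The only difference is in the mechanics of that last step --- where the paper eliminates $v_{1}$ to show $v_{2}$ would be an eigenvector of the real symmetric matrix $\Dbig^{T}\Rbig\Rbig^{T}\Dbig$ with the non-real eigenvalue $-j\omega/(1+j\omega)$, you read $a=0$ directly off the Rayleigh-quotient identity $\mathrm{Re}(s)\,\|v\|^{2}=-\|a\|^{2}$ and then conclude $b=0$ from injectivity of $\Rbig^{T}\Dbig$; both arguments land on the same contradiction with Lemma~\ref{lem:fullrow}.
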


\begin{proof}
It is easy to see that $\Lambda^{T}+\Lambda\leq 0$. Therefore
$\Lambda$ is at least neutrally stable. In particular, it cannot
have any eigenvalues with positive real part. To show that it can
neither have any eigenvalues on the imaginary axis let us suppose
the contrary. That is, assume $j\omega$ with $\omega\in\Real$ is
an eigenvalue of $\Lambda$. Then we could find two vectors
$v_{1},\,v_{2}$, at least one of them nonzero, satisfying
\begin{eqnarray*}
\left[\begin{array}{cc}-I&-\Rbig^{T}\Dbig\\
\Dbig^{T}\Rbig&0\end{array}\right]\left[\begin{array}{c}v_{1}\\
v_{2}\end{array}\right]=j\omega\left[\begin{array}{c}v_{1}\\
v_{2}\end{array}\right]
\end{eqnarray*}
which yields $v_{1}=-(1+j\omega)^{-1}\Rbig^{T}\Dbig v_{2}$ and
$\Dbig^{T}\Rbig v_{1}=j\omega v_{2}$. Note that $v_{2}$ cannot be
zero, for otherwise $v_{1}$ would also have to be zero and by
assumption it cannot be that both are zero. Hence we combine the
two equations and write
\begin{eqnarray*}
[\Dbig^{T}\Rbig\Rbig^{T}\Dbig]v_{2}=-\frac{j\omega}{1+j\omega}\,v_{2}\,.
\end{eqnarray*}
That is, $v_{2}$ is an eigenvector of
$\Dbig^{T}\Rbig\Rbig^{T}\Dbig$. Since
$\Dbig^{T}\Rbig\Rbig^{T}\Dbig$ is a real symmetric matrix, its
eigenvalues are real. Therefore we have to have $\omega=0$. Thence
$[\Dbig^{T}\Rbig\Rbig^{T}\Dbig]v_{2}=0$, i.e.,
$\Dbig^{T}\Rbig\Rbig^{T}\Dbig$ is singular, which however cannot
be true because $\Dbig^{T}\Rbig$ is full row rank by
Lemma~\ref{lem:fullrow}. Hence $\Lambda$ has no eigenvalue on the
imaginary axis, which completes the proof.
\end{proof}

\begin{lemma}\label{lem:K}
The matrix $\Kbig$ in the closed-loop
system~\eqref{eqn:bigclosedloop} reads
\begin{eqnarray*}
\Kbig&=&\left[\Bbig^{T}\Abig^{(N_{\rm
c}-1)T}\Dbig[\Dbig^{T}\Rbig\Rbig^{T}\Dbig]^{-1}\Dbig^{T}\Rbig-[e_{N_{\rm
c}}^{T}\otimes I]\ \ \ \Bbig^{T}\Abig^{(N_{\rm
c}-1)T}\Dbig[\Dbig^{T}\Rbig\Rbig^{T}\Dbig]^{-1}\right]\\
&&\qquad\times[I-e^{\Lambda\tau_{\rm
c}}]\times\left[\begin{array}{c}0\\\Dbig^{T}\Abig^{N_{\rm
c}}\end{array}\right]\,.
\end{eqnarray*}
\end{lemma}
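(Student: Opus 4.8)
The plan is to integrate the continuous-time part of the algorithm explicitly for frozen $k$ and then read off the map $\hat\ex[k]\mapsto\yu[k]$. First I would note that, by the selection \eqref{eqn:uij} and the block partition of $\dabilyu$, the generated input is the last block of $\dabilyu[k,\tau_{\rm c})$, i.e. $\yu[k]=[e_{N_{\rm c}}^{T}\otimes I]\,\dabilyu[k,\tau_{\rm c})$. By Lemma~\ref{lem:convenient} I may replace $\dabilyu$ by $\vi$, so that $\yu[k]=[e_{N_{\rm c}}^{T}\otimes I]\,\vi[k,\tau_{\rm c})$, where $\vi$ is the top block of the solution of the linear time-invariant system \eqref{eqn:convenient}. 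Since \eqref{eqn:bigclosedloopA} requires $\Bbig\yu=-\Bbig\Kbig\hat\ex$, it suffices to produce a $\Kbig$ with $\yu[k]=-\Kbig\hat\ex[k]$.

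Second, I would solve \eqref{eqn:convenient}. With $k$ frozen its forcing term is constant in $t$ and the initial state is zero, so variation of constants gives, for the stacked state,
\begin{eqnarray*}
\begin{bmatrix}\vi[k,t)\\ \mu[k,t)\end{bmatrix} &=& \Lambda^{-1}\left(e^{\Lambda t}-I\right)\begin{bmatrix}0\\ \Dbig^{T}\Abig^{N_{\rm c}}\hat\ex[k]\end{bmatrix}\,,
\end{eqnarray*}
where $\Lambda^{-1}$ exists because $\Lambda$ is Hurwitz by Lemma~\ref{lem:hurwitz}. Evaluating at $t=\tau_{\rm c}$, selecting the $\vi$-block, applying $[e_{N_{\rm c}}^{T}\otimes I]$, and absorbing the resulting sign into the convention $\yu[k]=-\Kbig\hat\ex[k]$ yields
\begin{eqnarray*}
\Kbig &=& \big[\,[e_{N_{\rm c}}^{T}\otimes I]\ \ 0\,\big]\,\Lambda^{-1}\,[I-e^{\Lambda\tau_{\rm c}}]\begin{bmatrix}0\\ \Dbig^{T}\Abig^{N_{\rm c}}\end{bmatrix}\,.
\end{eqnarray*}
Thus the whole lemma reduces to computing the top block-row $\big[\,[e_{N_{\rm c}}^{T}\otimes I]\ \ 0\,\big]\Lambda^{-1}$.

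Third, and this is the crux, I would invert $\Lambda$ in block form. Treating $\Lambda$ as a $2\times 2$ block matrix with the invertible $(1,1)$ block $-I$, its Schur complement is $-\Dbig^{T}\Rbig\Rbig^{T}\Dbig$, which is nonsingular precisely because $\Dbig^{T}\Rbig$ has full row rank by Lemma~\ref{lem:fullrow}; this is exactly where the controllability hypothesis enters. Carrying out the block-inverse formula, the top block-row of $\Lambda^{-1}$ comes out as $\big[\,-I+\Rbig^{T}\Dbig[\Dbig^{T}\Rbig\Rbig^{T}\Dbig]^{-1}\Dbig^{T}\Rbig\ \ \ \Rbig^{T}\Dbig[\Dbig^{T}\Rbig\Rbig^{T}\Dbig]^{-1}\,\big]$.

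Finally, I would left-multiply this row by $[e_{N_{\rm c}}^{T}\otimes I]$ and use the identity $[e_{N_{\rm c}}^{T}\otimes I]\,\Rbig^{T}=\Bbig^{T}\Abig^{(N_{\rm c}-1)T}$, which holds because $e_{N_{\rm c}}$ picks the last block of $\Rbig^{T}$, namely $(\Abig^{N_{\rm c}-1}\Bbig)^{T}$. Substituting this reproduces the two blocks displayed in the statement, completing the identification of $\Kbig$. The main obstacle is the block inversion together with the bookkeeping of signs and of the block orderings of $\dabilyu$, $\Rbig$, and $\Lambda$; once the Schur-complement nonsingularity is justified through Lemma~\ref{lem:fullrow}, the remainder is careful but routine matrix algebra.
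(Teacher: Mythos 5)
Your proposal is correct and follows essentially the same route as the paper: solve \eqref{eqn:convenient} by variation of constants, use Lemma~\ref{lem:convenient} and $\yu[k]=[e_{N_{\rm c}}^{T}\otimes I]\dabilyu[k,\tau_{\rm c})$ to write $\Kbig\hat\ex[k]=-\yu[k]$, and extract the top block-row of $\Lambda^{-1}$ (whose existence rests on Lemmas~\ref{lem:hurwitz} and~\ref{lem:fullrow}) together with the identity $[e_{N_{\rm c}}^{T}\otimes I]\Rbig^{T}=\Bbig^{T}\Abig^{(N_{\rm c}-1)T}$. The only cosmetic difference is that you derive $\Lambda^{-1}$ via the Schur complement of the $(1,1)$ block, whereas the paper simply displays the full inverse and checks it by direct substitution.
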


\begin{proof}
Consider \eqref{eqn:convenient}. It can be verified by direct
substitution that the solution reads
\begin{eqnarray*}
\left[\begin{array}{c}\vi[k,\,t)\\ \mu[k,\,t)\end{array}\right]
&=&\Lambda^{-1}[e^{\Lambda t}-I]\left[\begin{array}{c}0\\ {\Dbig}^{T}\Abig^{N_{\rm c}} \hat\ex[k]\end{array}\right]\\
&=&\left[\begin{array}{cc}\Rbig^{T}{\Dbig}[{\Dbig}^{T}{\Rbig}{\Rbig}^{T}{\Dbig}]^{-1}{\Dbig}^{T}{\Rbig}-I&{\Rbig}^{T}{\Dbig}
[{\Dbig}^{T}{\Rbig}{\Rbig}^{T}{\Dbig}]^{-1}\\
-[{\Dbig}^{T}{\Rbig}{\Rbig}^{T}{\Dbig}]^{-1}{\Dbig}^{T}{\Rbig}&-[{\Dbig}^{T}{\Rbig}{\Rbig}^{T}{\Dbig}]^{-1}\end{array}\right][e^{\Lambda
t}-I]\left[\begin{array}{c}0\\
\Dbig^{T}\Abig^{N_{\rm c}}\end{array}\right]\hat\ex[k]
\end{eqnarray*}
where $\Lambda^{-1}$ exists because $\Lambda$ is Hurwitz by
Lemma~\ref{lem:hurwitz} and
$[{\Dbig}^{T}{\Rbig}{\Rbig}^{T}{\Dbig}]^{-1}$ exists because
${\Dbig}^{T}{\Rbig}$ is full row rank by Lemma~\ref{lem:fullrow}.
Using $\yu[k]=\dabilyu^{[0]}[k,\,\tau_{\rm c})$ and
Lemma~\ref{lem:convenient} we can now write
\begin{eqnarray*}
\Kbig\hat\ex[k]
&=& -\yu[k]\\
&=& -\dabilyu^{[0]}[k,\,\tau_{\rm c})\\
&=& -[e_{N_{\rm c}}^{T}\otimes I]\dabilyu[k,\,\tau_{\rm c})\\
&=& -[e_{N_{\rm c}}^{T}\otimes I]\vi[k,\,\tau_{\rm c})\\
&=& [e_{N_{\rm c}}^{T}\otimes
I]\left[\Rbig^{T}{\Dbig}[{\Dbig}^{T}{\Rbig}{\Rbig}^{T}{\Dbig}]^{-1}{\Dbig}^{T}{\Rbig}-I\
\ \ {\Rbig}^{T}{\Dbig}
[{\Dbig}^{T}{\Rbig}{\Rbig}^{T}{\Dbig}]^{-1}\right][I-e^{\Lambda
\tau_{\rm c}}]\left[\begin{array}{c}0\\
\Dbig^{T}\Abig^{N_{\rm c}}\end{array}\right]\hat\ex[k]\,.
\end{eqnarray*}
The result then follows since $\Rbig[e_{N_{\rm c}}\otimes
I]=\Abig^{N_{\rm c}-1}\Bbig$.
\end{proof}

\subsection{Gain $\Lbig$}

Having computed $\Kbig$ of \eqref{eqn:bigclosedloop}, we now focus
on $\Lbig$. Let $\xi=[\xi_{1}^{T}\ \xi_{2}^{T}\ \cdots\
\xi_{q}^{T}]^{T}$, where $\xi_{i}\in\Real^{n}$ are the variables
in \eqref{eqn:xii}, and define
\begin{eqnarray*}
\Qbig=\left[\begin{array}{c}\Cbig\\ \Cbig\Abig\\ \vdots\\
\Cbig\Abig^{N_{\rm o}-1}\end{array}\right]\,.
\end{eqnarray*}
This allows the dynamics~\eqref{eqn:xii} to be compactly expressed
as
\begin{eqnarray}\label{eqn:comfort}
\dot\xi[k,\,t)=-\Qbig^{T}\Qbig\xi[k,\,t)+\Abig^{(N_{\rm
o}-1)T}\Cbig^{T}(\vay[k]-\Cbig\hat\ex[k])\,,\quad\xi[k,\,0)=0\,.
\end{eqnarray}
Finally, we define $\Gamma=\Dbig^{T}\Qbig^{T}\Qbig\Dbig$. Let us
make a few observations before we attempt to solve
\eqref{eqn:comfort}.

\begin{lemma}\label{lem:fullrow2}
The matrix $\Dbig^{T}\Qbig^{T}$ is full row rank.
\end{lemma}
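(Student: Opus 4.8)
The plan is to mirror the proof of Lemma~\ref{lem:fullrow}, replacing its controllability argument by the observability dual. First I would note that $\Dbig^{T}\Qbig^{T}$ having full row rank is equivalent to $\Qbig\Dbig=(\Dbig^{T}\Qbig^{T})^{T}$ having full column rank. So I argue by contradiction: suppose $\Qbig\Dbig$ is not full column rank, i.e., there exists a nonzero $\eta\in(\Real^{n})^{q-1}$ with $\Qbig\Dbig\eta=0$.

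Next I would set $\zeta=\Dbig\eta$. Since the columns of $D$ form an orthonormal basis for $\setS_{1}^{\perp}$, identity~(ii) gives ${\rm range}\,\Dbig=\setS_{n}^{\perp}$, so $\zeta\in\setS_{n}^{\perp}$. Because $\Dbig$ is full column rank and $\eta\neq 0$, we have $\zeta\neq 0$, whence $\zeta\notin\setS_{n}$. By construction $\Qbig\zeta=\Qbig\Dbig\eta=0$.

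The crux is to transfer the relation $\Qbig\zeta=0$ over to $\Wbig_{\rm o}$. Since $N_{\rm o}\geq n$, the matrix $\Qbig$ stacks the blocks $\Cbig\Abig^{\ell}$ for $\ell=0,\ldots,N_{\rm o}-1$, which includes the range $\ell=0,\ldots,n-1$ making up $\Wbig_{\rm o}$; hence every row of $\Wbig_{\rm o}$ appears among the rows of $\Qbig$, giving ${\rm null}\,\Qbig\subset{\rm null}\,\Wbig_{\rm o}$. Therefore $\Qbig\zeta=0$ forces $\zeta\in{\rm null}\,\Wbig_{\rm o}$. Observability of the array~\eqref{eqn:array} then yields ${\rm null}\,\Wbig_{\rm o}\subset\setS_{n}$, so $\zeta\in\setS_{n}$, contradicting $\zeta\notin\setS_{n}$. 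This contradiction establishes the claim.

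I do not expect a genuine obstacle here, since this is precisely the transpose/dual of Lemma~\ref{lem:fullrow}. The only point requiring care is the bookkeeping that $N_{\rm o}\geq n$ secures the inclusion ${\rm null}\,\Qbig\subset{\rm null}\,\Wbig_{\rm o}$ (equivalently, that $\Qbig$ probes at least as many powers of $\Abig$ as $\Wbig_{\rm o}$ does), which is the exact counterpart of the role played by $N_{\rm c}\geq n$ in the controllability argument of Lemma~\ref{lem:fullrow}.
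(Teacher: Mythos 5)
Your proposal is correct and is exactly the argument the paper intends: the paper's proof consists of the single remark that the claim follows by duality from Lemma~\ref{lem:fullrow} and observability, and you have simply written out that dual argument in full (including the correct use of $N_{\rm o}\geq n$ to get ${\rm null}\,\Qbig\subset{\rm null}\,\Wbig_{\rm o}$).
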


\begin{proof}
This is a consequence of the observability of the
array~\eqref{eqn:array}. See the dual result
Lemma~\ref{lem:fullrow}.
\end{proof}

\begin{lemma}\label{lem:hurwitz2}
The matrix $-\Gamma$ is Hurwitz.
\end{lemma}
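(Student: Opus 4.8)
The plan is to exploit the manifestly symmetric, quadratic-form structure of $\Gamma$. Writing $\Gamma = \Dbig^{T}\Qbig^{T}\Qbig\Dbig = (\Qbig\Dbig)^{T}(\Qbig\Dbig)$, I would first observe that $\Gamma$ is real symmetric and positive semidefinite, since $v^{T}\Gamma v = \|\Qbig\Dbig v\|^{2}\geq 0$ for every $v$. Consequently $-\Gamma$ is also real symmetric, and all of its eigenvalues are real and nonpositive. Unlike the situation in Lemma~\ref{lem:hurwitz}, there is therefore no need to separately rule out eigenvalues on the imaginary axis: for a symmetric matrix, being Hurwitz is equivalent to being negative definite.

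So the whole statement reduces to showing that $\Gamma$ is in fact positive definite, i.e.\ nonsingular. I would argue this through its null space. Since $v^{T}\Gamma v = \|\Qbig\Dbig v\|^{2}$, we have $\Gamma v = 0$ if and only if $\Qbig\Dbig v = 0$; hence ${\rm null}\,\Gamma = {\rm null}\,(\Qbig\Dbig)$. Thus $\Gamma$ is nonsingular precisely when $\Qbig\Dbig$ has trivial null space, i.e.\ is full column rank.

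Finally I would invoke Lemma~\ref{lem:fullrow2}, which asserts that $\Dbig^{T}\Qbig^{T}$ is full row rank. Since $\Dbig^{T}\Qbig^{T} = (\Qbig\Dbig)^{T}$, full row rank of $\Dbig^{T}\Qbig^{T}$ is the same as full column rank of $\Qbig\Dbig$, which by the previous step makes $\Gamma$ nonsingular and hence positive definite. Its eigenvalues are then strictly positive, so the eigenvalues of $-\Gamma$ are strictly negative real numbers, placing them in the open left half-plane. Therefore $-\Gamma$ is Hurwitz.

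I expect no genuine obstacle here. The only substantive move is recognizing that the quadratic-form structure $\Gamma = (\Qbig\Dbig)^{T}(\Qbig\Dbig)$ collapses the Hurwitz question into a single rank condition already supplied by Lemma~\ref{lem:fullrow2}, thereby bypassing the more delicate imaginary-axis analysis that was needed for the non-symmetric matrix $\Lambda$ in Lemma~\ref{lem:hurwitz}.
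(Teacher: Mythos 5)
Your argument is correct and follows essentially the same route as the paper: factor $\Gamma=(\Qbig\Dbig)^{T}\Qbig\Dbig$ to get symmetric positive semidefiniteness, then use the full-row-rank conclusion of Lemma~\ref{lem:fullrow2} to upgrade to positive definiteness, so that $-\Gamma$ has strictly negative real eigenvalues. The only difference is that you spell out the null-space equivalence explicitly, which the paper leaves implicit.
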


\begin{proof}
The matrix $\Gamma$ is symmetric positive semidefinite because we
can write $\Gamma=(\Qbig\Dbig)^{T}\Qbig\Dbig$. Also, it is
nonsingular because $(\Qbig\Dbig)^{T}$ is full row rank by
Lemma~\ref{lem:fullrow2}. Hence $\Gamma$ is symmetric positive
definite. Then $-\Gamma$ is symmetric negative definite and
consequently all its eigenvalues are real and strictly negative.
Hence the result.
\end{proof}

\begin{lemma}\label{lem:initial2}
We have $\Dbig\Dbig^{T}\Abig^{\ell T}\Cbig^{T}=\Abig^{\ell
T}\Cbig^{T}$ for any integer $\ell\geq 0$.
\end{lemma}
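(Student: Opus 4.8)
The plan is to mirror the proof of Lemma~\ref{lem:initial} verbatim, swapping the roles of $\Bbig$ and $\Cbig^{T}$ and invoking identity (iv) where that proof used identity (iii). Concretely, the target identity $\Dbig\Dbig^{T}\Abig^{\ell T}\Cbig^{T}=\Abig^{\ell T}\Cbig^{T}$ is exactly the transpose-flavored dual of $\Dbig\Dbig^{T}\Abig^{\ell}\Bbig=\Abig^{\ell}\Bbig$, so the whole argument reduces to showing that the $\Sbig$-component of $\Abig^{\ell T}\Cbig^{T}$ vanishes and then appealing to the resolution of identity (i).

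First I would compute $\Sbig^{T}\Abig^{\ell T}\Cbig^{T}$. Using $\Abig=[I_{q}\otimes A]$ one has $\Abig^{\ell T}=[I_{q}\otimes A^{\ell T}]$, whence
\[
\Sbig^{T}\Abig^{\ell T}\Cbig^{T}=[S^{T}\otimes I_{n}][I_{q}\otimes A^{\ell T}]\Cbig^{T}=[S^{T}\otimes A^{\ell T}]\Cbig^{T}=A^{\ell T}[S^{T}\otimes I_{n}]\Cbig^{T},
\]
where the last step pulls $A^{\ell T}$ out of the Kronecker product. Since $[S^{T}\otimes I_{n}]\Cbig^{T}=(\Cbig[S\otimes I_{n}])^{T}$, identity (iv) (namely $\Cbig[S\otimes I_{n}]=0$) forces this to vanish. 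Thus $\Sbig^{T}\Abig^{\ell T}\Cbig^{T}=0$, and in particular $\Sbig\Sbig^{T}\Abig^{\ell T}\Cbig^{T}=0$.

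With the $\Sbig$-term eliminated I would finish exactly as in Lemma~\ref{lem:initial}: write
\[
\Dbig\Dbig^{T}\Abig^{\ell T}\Cbig^{T}=\Dbig\Dbig^{T}\Abig^{\ell T}\Cbig^{T}+\Sbig(\Sbig^{T}\Abig^{\ell T}\Cbig^{T})=(\Dbig\Dbig^{T}+\Sbig\Sbig^{T})\Abig^{\ell T}\Cbig^{T}=\Abig^{\ell T}\Cbig^{T},
\]
the final equality being identity (i). I do not anticipate any genuine obstacle here; the only points requiring a little care are the transposition bookkeeping (so that identity (iv) rather than (iii) is the one that applies) and the Kronecker manipulation $[S^{T}\otimes I_{n}][I_{q}\otimes A^{\ell T}]=A^{\ell T}[S^{T}\otimes I_{n}]$. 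Alternatively, one could simply quote the dual statement directly---as the paper does for Lemma~\ref{lem:fullrow2}---by transposing Lemma~\ref{lem:initial} after substituting $\Cbig^{T}$ for $\Bbig$, but spelling out the two displays above makes the dependence on identities (i) and (iv) transparent.
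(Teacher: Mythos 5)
Your proposal is correct and is exactly the argument the paper intends: its proof of this lemma is literally the single line ``Like Lemma~\ref{lem:initial},'' and what you have written is that analogous argument spelled out, with identity (iv) replacing identity (iii) and the same appeal to $\Dbig\Dbig^{T}+\Sbig\Sbig^{T}=I$. The Kronecker manipulation and transposition bookkeeping are all handled correctly, so there is nothing to add.
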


\begin{proof}
Like Lemma~\ref{lem:initial}.
\end{proof}

\begin{lemma}\label{lem:xi}
The solution to \eqref{eqn:comfort} reads
$\xi[k,\,t)=\Dbig\Gamma^{-1}[I-e^{-\Gamma
t}]\Dbig^{T}\Abig^{(N_{\rm
o}-1)T}\Cbig^{T}(\vay[k]-\Cbig\hat\ex[k])$.
\end{lemma}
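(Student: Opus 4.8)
The plan is to treat \eqref{eqn:comfort} as a linear, constant-coefficient ordinary differential equation in $t$ (with $k$ frozen) whose forcing term $f:=\Abig^{(N_{\rm o}-1)T}\Cbig^{T}(\vay[k]-\Cbig\hat\ex[k])$ is constant, and then verify the claimed closed-form solution by direct substitution (mirroring the style of Lemma~\ref{lem:K}). Two elementary identities must be assembled first. Since the columns of $D$ are orthonormal, $D^{T}D=I_{q-1}$, whence $\Dbig^{T}\Dbig=[D^{T}D]\otimes I_{n}=I$. More importantly, writing $\Qbig^{T}=[\Cbig^{T}\ \Abig^{T}\Cbig^{T}\ \cdots\ \Abig^{(N_{\rm o}-1)T}\Cbig^{T}]$ and applying Lemma~\ref{lem:initial2} to each block column gives $\Dbig\Dbig^{T}\Qbig^{T}=\Qbig^{T}$, equivalently $\Qbig\Dbig\Dbig^{T}=\Qbig$.

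The key structural consequence I would extract from these is the factorization
\begin{eqnarray*}
\Qbig^{T}\Qbig=(\Dbig\Dbig^{T}\Qbig^{T})(\Qbig\Dbig\Dbig^{T})=\Dbig[\Dbig^{T}\Qbig^{T}\Qbig\Dbig]\Dbig^{T}=\Dbig\Gamma\Dbig^{T}\,,
\end{eqnarray*}
which routes the (generally singular) matrix $\Qbig^{T}\Qbig$ through the nonsingular matrix $\Gamma$ (invertible by Lemma~\ref{lem:hurwitz2}). In the same spirit, applying Lemma~\ref{lem:initial2} with $\ell=N_{\rm o}-1$ to the forcing term yields $\Dbig\Dbig^{T}f=f$.

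With these in hand the verification is mechanical. At $t=0$ the bracket $[I-e^{-\Gamma t}]$ vanishes, so the proposed expression gives $\xi[k,\,0)=0$, matching the initial condition. Differentiating in $t$ and using that the derivative of $[I-e^{-\Gamma t}]$ is $\Gamma e^{-\Gamma t}$, the leading $\Gamma^{-1}$ cancels and $\dot\xi[k,\,t)=\Dbig e^{-\Gamma t}\Dbig^{T}f$. On the other side, substituting the factorization $\Qbig^{T}\Qbig=\Dbig\Gamma\Dbig^{T}$ together with $\Dbig^{T}\Dbig=I$ collapses $-\Qbig^{T}\Qbig\xi$ to $-\Dbig[I-e^{-\Gamma t}]\Dbig^{T}f=-\Dbig\Dbig^{T}f+\Dbig e^{-\Gamma t}\Dbig^{T}f$; adding the forcing $f$ and invoking $\Dbig\Dbig^{T}f=f$ cancels the first term, leaving exactly $\Dbig e^{-\Gamma t}\Dbig^{T}f$. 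The two sides agree, so the formula solves \eqref{eqn:comfort}.

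The only genuine obstacle is the first step: $\Qbig^{T}\Qbig$ is positive semidefinite but singular (its null space contains the synchronization directions $\setS_{n}$), so one cannot simply write $\xi=(\Qbig^{T}\Qbig)^{-1}[I-e^{-\Qbig^{T}\Qbig t}]f$ as in the nonsingular case. Recognizing that $\Qbig^{T}\Qbig=\Dbig\Gamma\Dbig^{T}$ with $\Gamma$ nonsingular, and that both the state and the forcing live in ${\rm range}\,\Dbig=\setS_{n}^{\perp}$, is what makes the closed form well defined; everything after that is bookkeeping with the projection $\Dbig\Dbig^{T}$. Alternatively, one may \emph{derive} rather than verify the formula by computing $\int_{0}^{t}e^{-\Qbig^{T}\Qbig\tau}f\,d\tau$ directly, using the identity $e^{-\Dbig\Gamma\Dbig^{T}\tau}=I+\Dbig[e^{-\Gamma\tau}-I]\Dbig^{T}$, which itself follows from $\Dbig^{T}\Dbig=I$.
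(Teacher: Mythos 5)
Your proposal is correct and follows essentially the same route as the paper: verify the initial condition, differentiate the candidate, and substitute into the right-hand side of \eqref{eqn:comfort}, using the projection identity of Lemma~\ref{lem:initial2} to collapse $-\Qbig^{T}\Qbig\xi+f$ to $\Dbig e^{-\Gamma t}\Dbig^{T}f$. Your explicit factorization $\Qbig^{T}\Qbig=\Dbig\Gamma\Dbig^{T}$ is just a tidier packaging of the same algebra the paper performs inline via $\Qbig^{T}\Qbig\Dbig=\Dbig\Gamma$.
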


\begin{proof}
First note that the initial condition constraint $\xi[k,\,0)=0$ is
satisfied. Now we show that this $\xi[k,\,t)$ also satisfies the
differential equation. For compactness let $\beta=\Abig^{(N_{\rm
o}-1)T}\Cbig^{T}(\vay[k]-\Cbig\hat\ex[k])$. Note that
$\Dbig\Dbig^{T}\beta=\beta$ and
$\Dbig\Dbig^{T}\Qbig^{T}=\Qbig^{T}$ by Lemma~\ref{lem:initial2}.
Hence putting our candidate solution into the righthand side of
\eqref{eqn:comfort} yields
\begin{eqnarray*}
-\Qbig^{T}\Qbig\xi[k,\,t)+\beta&=&-\Qbig^{T}\Qbig\Dbig\Gamma^{-1}[I-e^{-\Gamma
t}]\Dbig^{T}\beta+\beta\\
&=&-\Dbig\underbrace{\Dbig^{T}\Qbig^{T}\Qbig\Dbig}_{\Gamma}\Gamma^{-1}[I-e^{-\Gamma
t}]\Dbig^{T}\beta+\beta\\
&=&-\Dbig[I-e^{-\Gamma
t}]\Dbig^{T}\beta+\beta\\
&=&\Dbig e^{-\Gamma
t}\Dbig^{T}\beta-\Dbig\Dbig^{T}\beta+\beta\\
&=&\Dbig e^{-\Gamma t}\Dbig^{T}\beta\,.
\end{eqnarray*}
Now, by differentiating the candidate solution we obtain
\begin{eqnarray*}\label{eqn:willy}
\dot\xi[k,\,t)&=&\frac{d}{dt}\left\{\Dbig\Gamma^{-1}[I-e^{-\Gamma t}]\Dbig^{T}\beta\right\}\nonumber\\
&=&\Dbig\Gamma^{-1}[\Gamma e^{-\Gamma t}]\Dbig^{T}\beta\nonumber\\
&=&\Dbig e^{-\Gamma t}\Dbig^{T}\beta\\
&=&-\Qbig^{T}\Qbig\xi[k,\,t)+\beta
\end{eqnarray*}
which was to be shown.
\end{proof}

\begin{lemma}\label{lem:L}
The matrix $\Lbig$ in the closed-loop
system~\eqref{eqn:bigclosedloop} reads
\begin{eqnarray*}
\Lbig=\Abig^{N_{\rm o}}\Dbig\Gamma^{-1}[I-e^{-\Gamma \tau_{\rm
o}}]\Dbig^{T}\Abig^{(N_{\rm o}-1)T}\Cbig^{T}\,.
\end{eqnarray*}
\end{lemma}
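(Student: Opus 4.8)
The plan is to read off $\Lbig$ by substituting the closed-form solution of Lemma~\ref{lem:xi} into the update equation~\eqref{eqn:xhati} for $\hat x_i$, then matching the resulting expression against the prescribed form~\eqref{eqn:bigclosedloopB}.

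First I would stack the scalar updates~\eqref{eqn:xhati} into the big-vector form. Collecting $\hat x_i[k+1]=A\hat x_i[k]+A^{N_{\rm o}}\xi_i[k,\tau_{\rm o})+\sum_j B_{ij}u_{ij}[k]$ over all $i$ and using $\hat\ex=[\hat x_1^T\ \cdots\ \hat x_q^T]^T$ gives
\begin{eqnarray*}
\hat\ex^{+}=\Abig\hat\ex+[I_q\otimes A^{N_{\rm o}}]\,\xi[k,\tau_{\rm o})+\Bbig\yu[k]\,,
\end{eqnarray*}
since the actuation terms $\sum_j B_{ij}u_{ij}$ reassemble into $\Bbig\yu$ by the definition of $\Bbig={\rm inc}\,(B_{::})$. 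Noting $[I_q\otimes A^{N_{\rm o}}]=\Abig^{N_{\rm o}}$ and recalling from the derivation of $\Kbig$ that $\Bbig\yu[k]=-\Bbig\Kbig\hat\ex[k]$, the first and last terms already match~\eqref{eqn:bigclosedloopB}. What remains is to show the middle term equals $\Lbig(\vay-\Cbig\hat\ex)$ with the claimed $\Lbig$.

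Next I would substitute the explicit solution from Lemma~\ref{lem:xi}, evaluated at $t=\tau_{\rm o}$, namely
\begin{eqnarray*}
\xi[k,\tau_{\rm o})=\Dbig\Gamma^{-1}[I-e^{-\Gamma\tau_{\rm o}}]\Dbig^{T}\Abig^{(N_{\rm o}-1)T}\Cbig^{T}(\vay[k]-\Cbig\hat\ex[k])\,.
\end{eqnarray*}
Left-multiplying by $\Abig^{N_{\rm o}}$ yields exactly
\begin{eqnarray*}
\Abig^{N_{\rm o}}\xi[k,\tau_{\rm o})=\Abig^{N_{\rm o}}\Dbig\Gamma^{-1}[I-e^{-\Gamma\tau_{\rm o}}]\Dbig^{T}\Abig^{(N_{\rm o}-1)T}\Cbig^{T}(\vay[k]-\Cbig\hat\ex[k])\,,
\end{eqnarray*}
and comparing the coefficient of $(\vay-\Cbig\hat\ex)$ with~\eqref{eqn:bigclosedloopB} identifies $\Lbig$ as stated. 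The inverse $\Gamma^{-1}$ is legitimate because $\Gamma$ is nonsingular by Lemma~\ref{lem:hurwitz2} (equivalently Lemma~\ref{lem:fullrow2}).

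I do not expect a serious obstacle here: the lemma is essentially a bookkeeping assembly step, since the hard analytic work (solving the $\xi$-dynamics in closed form, and verifying the factored form of the solution) is already done in Lemma~\ref{lem:xi}. The only point requiring mild care is the stacking identity $\sum_j B_{ij}u_{ij}=\Bbig\yu$ and the recognition that $[I_q\otimes A^{N_{\rm o}}]=\Abig^{N_{\rm o}}$, together with confirming that the $A\hat x_i$ and actuation terms coincide with the $\Abig\hat\ex-\Bbig\Kbig\hat\ex$ portion already established during the computation of $\Kbig$. Once these alignments are in place, the expression for $\Lbig$ falls out by direct comparison.
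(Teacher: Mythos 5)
Your proposal is correct and follows essentially the same route as the paper: stack~\eqref{eqn:xhati} into the big-vector form, substitute the closed-form solution of Lemma~\ref{lem:xi} at $t=\tau_{\rm o}$, and read off $\Lbig$ by comparison with~\eqref{eqn:bigclosedloopB}. The paper's proof is just a terser version of the same bookkeeping, so no further comment is needed.
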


\begin{proof}
Using \eqref{eqn:xhati} and Lemma~\ref{lem:xi} we can write
\begin{eqnarray*}
\hat\ex[k+1]
&=&\Abig\hat\ex[k]-\Bbig{\mathbf
K}\hat\ex[k]+\Abig^{N_{\rm o}}\xi[k,\,\tau_{\rm o})\\
&=&\Abig\hat\ex[k]-\Bbig\Kbig\hat\ex[k]+\Abig^{N_{\rm
o}}\Dbig\Gamma^{-1}[I-e^{-\Gamma \tau_{\rm
o}}]\Dbig^{T}\Abig^{(N_{\rm
o}-1)T}\Cbig^{T}(\vay[k]-\Cbig\hat\ex[k])\,.
\end{eqnarray*}
Comparing this to \eqref{eqn:bigclosedloopB} yields the result.
\end{proof}

\section{Stability}\label{sec:stability}

In the previous section we obtained the explicit expression for
the righthand side of \eqref{eqn:bigclosedloop} by computing the
gains $\Kbig$ and $\Lbig$. Now we study the behavior of the
closed-loop system. Our goal is to show that (under appropriate
choices of the parameters $\tau_{\rm c}, \tau_{\rm o}$) for all
initial conditions $\ex[0]$ and $\hat\ex[0]$ the solution $\ex[k]$
enjoys the convergence $\|\ex[k]\|_{\setS_{n}}\to 0$, where
$\|\cdot\|_{\setS_{n}}$ denotes the Euclidean distance to the
subspace $\setS_{n}$. By proving this convergence we will have
established Theorem~\ref{thm:main}. For our analysis we borrow the
following result due to
Kleinman~\cite{Kleinman74}.\footnote{Kleinman assumes that the
matrix $F$ is invertible. This assumption however is superfluous;
see \cite{Tuna15}.}

\begin{proposition}\label{prop:key}
Let $N\geq 1$ be an integer, $F\in\Real^{n\times n}$,
$G\in\Real^{n\times p}$, and ${\rm rank}\,[G\ FG\ \cdots\
F^{N-1}G]=n$. Then the matrix
\begin{eqnarray*}
H=F-GG^{T}F^{(N-1)T}\left(\sum_{\ell=0}^{N-1}F^{\ell}GG^{T}F^{\ell
T}\right)^{-1}F^{N}
\end{eqnarray*}
is Schur, i.e., all the eigenvalues of $H$ are on the open unit
disc.
\end{proposition}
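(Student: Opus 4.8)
The plan is to read $H$ as the one‑step output of a feedback $H=F-GK$ with gain $K=G^{T}F^{(N-1)T}W^{-1}F^{N}$, where $W:=\sum_{\ell=0}^{N-1}F^{\ell}GG^{T}F^{\ell T}$ is the finite‑horizon controllability Gramian. First I would record that the rank hypothesis ${\rm rank}\,[G\ FG\ \cdots\ F^{N-1}G]=n$ makes $W$ symmetric positive definite, hence invertible, so that $K$ and $H$ are well defined; and that shifting the defining sum gives the telescoping identity $W-FWF^{T}=GG^{T}-F^{N}GG^{T}F^{NT}$, which I expect to use at the end.

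Next I would exhibit a Lyapunov‑type certificate. Set $P:=F^{NT}W^{-1}F^{N}\succeq 0$ and $V(x):=x^{T}Px$. The gain $K$ is exactly the first move of the minimum‑energy input that steers the state to the origin in $N$ steps, and $V$ is the associated optimal cost. A dynamic‑programming (prefix) argument then yields the decrease $V(Hx)\le V(x)-\|Kx\|^{2}$ for every $x$: applying the optimal first input lands the state at $Hx$, from which the truncated tail of the same input still reaches the origin within the horizon, so the optimal cost at $Hx$ can exceed $V(x)-\|Kx\|^{2}$ by nothing.

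I would then convert this into a spectral statement. For an eigenpair $Hv=\lambda v$ with $v\neq 0$, evaluating the decrease at $x=v$ gives $(|\lambda|^{2}-1)V(v)+\|Kv\|^{2}\le 0$. Since $F^{N}v=\lambda^{N}v$ one computes $V(v)=|\lambda|^{2N}\,v^{*}W^{-1}v$, which is strictly positive whenever $\lambda\neq 0$ (and the case $\lambda=0$ already lies in the open disc). Hence $|\lambda|\le 1$, and on the boundary $|\lambda|=1$ both nonnegative terms must vanish: in particular $Kv=0$, whence $Hv=Fv=\lambda v$. So any unit‑modulus closed‑loop eigenvector would be an eigenvector of $F$ on the unit circle along which the feedback is inactive.

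The main obstacle is excluding this marginal case, and it is precisely here that controllability is consumed. The identity $Kv=0$ only says $G^{T}F^{(N-1)T}W^{-1}v=0$, a single condition that is not by itself contradictory; the work is to propagate it. Writing $p:=W^{-1}v$ and using $FWp=\lambda Wp$ together with $W=\sum_{\ell=0}^{N-1}F^{\ell}GG^{T}F^{\ell T}$, I would show that the row vectors $a_{\ell}:=p^{*}F^{\ell}G$ obey a shift relation (obtained by matching coefficients of $F^{\ell}G$ in $FWp=\lambda Wp$) that, starting from $a_{N-1}=0$, forces $a_{\ell}=0$ for all $\ell=0,\dots,N-1$. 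This makes $p$ a nonzero left null vector of $[G\ FG\ \cdots\ F^{N-1}G]$, contradicting the rank hypothesis and ruling out $|\lambda|=1$. Because this argument never inverts $F$, it is exactly what lets one drop Kleinman's superfluous invertibility assumption; I expect the delicate bookkeeping in the propagation step to be the crux of the proof.
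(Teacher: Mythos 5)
First, a caveat on the comparison: the paper itself offers no proof of Proposition~\ref{prop:key}; it is imported from Kleinman~\cite{Kleinman74}, with a footnote deferring to \cite{Tuna15} for the removal of the invertibility assumption on $F$. So your proposal can only be judged on its own terms. Its architecture --- read $K=G^{T}F^{(N-1)T}W^{-1}F^{N}$ as the first move of the minimum-energy $N$-step control to the origin, certify $V(Hx)\le V(x)-\|Kx\|^{2}$ with $V(x)=x^{T}F^{NT}W^{-1}F^{N}x$ by the prefix argument, then use controllability to exclude unit-modulus eigenvalues --- is the standard and correct route, and the positivity of $W$, the telescoping identity, and the decrease inequality are all fine.

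Two steps do not hold up. The smaller one: for an eigenvector $v$ of $H$ the identity $F^{N}v=\lambda^{N}v$ is false in general; $v$ is an eigenvector of the closed loop, not of $F$, and you only earn $Hv=Fv$ after establishing $Kv=0$, which at that point you have not. You invoke it to get $V(v)>0$ for $\lambda\neq 0$. The repair is a case split that avoids it: if $V(v)>0$ the inequality gives $|\lambda|\le 1$ outright, while if $V(v)=0$ then $Kv=0$, hence $Hv=Fv$ and $F^{N}v=\lambda^{N}v$, and $V(v)=0$ with $W^{-1}\succ 0$ forces $F^{N}v=0$, so $\lambda=0$. The larger problem sits exactly where you predicted the crux would be: ``matching coefficients of $F^{\ell}G$'' in $FWp=\lambda Wp$ is not a legitimate operation. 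It would require the columns of $[G\ FG\ \cdots\ F^{N-1}G]$ (or of the once-shifted family) to be linearly independent, i.e.\ $Np\le n$, whereas full row rank already demands $Np\ge n$; except in the nongeneric square case the blocks are linearly dependent and no shift relation $a_{\ell-1}=\lambda a_{\ell}$ can be extracted, so the propagation collapses at its first step. The conclusion is nonetheless reachable by pairing with $p$ rather than matching: from $p^{*}FWp=\lambda\,p^{*}Wp$ one gets $\sum_{\ell=0}^{N-2}a_{\ell+1}^{*}a_{\ell}=\lambda\,p^{*}Wp$ (the top term drops since $a_{N-1}=0$), and Cauchy--Schwarz with $|\lambda|=1$ forces $a_{0}=0$; iterating annihilates every $a_{\ell}$ and contradicts $p^{*}Wp=v^{*}W^{-1}v>0$. (Equivalently, the uniqueness of the minimum-norm control shows the zero-padded tail from $Hv=\lambda v$ must equal $\lambda$ times the optimal control from $v$, whence all its components vanish and $V(v)=0$.) As written, then, the plan is right but the proof is incomplete: one intermediate claim is wrong and the decisive step rests on an invalid mechanism.
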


Define the reduced state $\exr=\Dbig^{T}\ex$ and the error
$\er=\Dbig^{T}(\hat\ex-\ex)$. Note that
$\|\exr\|=\|\ex\|_{\setS_{n}}$. Also, define the following reduced
parameters
\begin{eqnarray*}
\Abigr=\Dbig^{T}\Abig\Dbig\,,\quad \Bbigr=\Dbig^{T}\Bbig\,,\quad
\Cbigr=\Cbig\Dbig\,,\quad \Kbigr=\Kbig\Dbig\,,\quad
\Lbigr=\Dbig^{T}\Lbig\,.
\end{eqnarray*}

\begin{lemma}\label{lem:initial3}
We have
$\Dbig^{T}\Abig^{\ell}=\Dbig^{T}\Abig^{\ell}\Dbig\Dbig^{T}$ for
any integer $\ell\geq 0$.
\end{lemma}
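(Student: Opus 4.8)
The plan is to reduce the claimed identity to the single orthogonality relation $\Dbig^{T}\Abig^{\ell}\Sbig=0$ and then invoke the resolution of identity (i), namely $\Dbig\Dbig^{T}+\Sbig\Sbig^{T}=I_{qn}$. Concretely, I would start from the right-hand side and write
\[
\Dbig^{T}\Abig^{\ell}\Dbig\Dbig^{T}=\Dbig^{T}\Abig^{\ell}(I_{qn}-\Sbig\Sbig^{T})=\Dbig^{T}\Abig^{\ell}-\Dbig^{T}\Abig^{\ell}\Sbig\Sbig^{T},
\]
so that the lemma is equivalent to the vanishing of the last term, for which it suffices to show $\Dbig^{T}\Abig^{\ell}\Sbig=0$. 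This mirrors exactly the bookkeeping in the proof of Lemma~\ref{lem:initial}, with the roles of the left and right factors interchanged: there one inserts $\Sbig(\Sbig^{T}\Abig^{\ell}\Bbig)$ and uses identity (iii); here the symmetric ingredient is $D^{T}S=0$.

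The second step establishes $\Dbig^{T}\Abig^{\ell}\Sbig=0$ via the mixed-product rule for Kronecker products. Since $\Abig=I_{q}\otimes A$ gives $\Abig^{\ell}=I_{q}\otimes A^{\ell}$, and since $\Dbig=D\otimes I_{n}$, $\Sbig=S\otimes I_{n}$, I would compute
\[
\Dbig^{T}\Abig^{\ell}\Sbig=(D^{T}\otimes I_{n})(I_{q}\otimes A^{\ell})(S\otimes I_{n})=(D^{T}S)\otimes A^{\ell}.
\]
The scalar factor $D^{T}S$ vanishes because $S=\one_{q}/\sqrt{q}$ spans $\setS_{1}$ while the columns of $D$ form an orthonormal basis of $\setS_{1}^{\perp}$; equivalently, orthonormality of the columns of $[D\ S]$ forces $D^{T}S=0$. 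Hence $\Dbig^{T}\Abig^{\ell}\Sbig=0\otimes A^{\ell}=0$, and substituting this back into the first display gives $\Dbig^{T}\Abig^{\ell}\Dbig\Dbig^{T}=\Dbig^{T}\Abig^{\ell}$, which is the claim.

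Honestly there is no substantial obstacle here: the statement is a routine projection identity, the exact companion of Lemma~\ref{lem:initial} and Lemma~\ref{lem:initial2}. The only point demanding a little care is keeping the Kronecker computation straight—one applies $(P\otimes Q)(R\otimes T)=PR\otimes QT$ twice and relies on $\Abig$ acting blockwise as $I_{q}\otimes A^{\ell}$ so that $A^{\ell}$ factors out cleanly and the vanishing is decided entirely by $D^{T}S$. Once that is in place, identity (i) does all the remaining work, just as in the proofs already given for the $\Bbig$- and $\Cbig$-flavoured versions.
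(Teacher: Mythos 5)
Your proof is correct and is exactly the argument the paper intends: its own proof is just the remark ``Like Lemma~\ref{lem:initial}'', and your computation---inserting $I_{qn}=\Dbig\Dbig^{T}+\Sbig\Sbig^{T}$ and killing the cross term via $(D^{T}\otimes I_{n})(I_{q}\otimes A^{\ell})(S\otimes I_{n})=(D^{T}S)\otimes A^{\ell}=0$---is precisely that adaptation. Nothing is missing.
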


\begin{proof}
Like Lemma~\ref{lem:initial}.
\end{proof}
\vspace{0.12in}

Note that Lemma~\ref{lem:K} and Lemma~\ref{lem:initial3} imply
$\Kbig=\Kbig\Dbig\Dbig^{T}$. Using the structural properties of
our matrices $\Abig,\,\Bbig,\,\Cbig$ emphasized in
Lemmas~\ref{lem:initial}, \ref{lem:initial2}, and
\ref{lem:initial3}, we now proceed to obtain the dynamics for
$\exr$ and $\er$. Consider \eqref{eqn:bigclosedloopA}. We can
write
\begin{eqnarray*}
\exr^{+}&=&\Dbig^{T}\ex^{+}\\
&=&\Dbig^{T}\Abig\ex-\Dbig^{T}\Bbig\Kbig\hat\ex\\
&=&\Dbig^{T}\Abig\Dbig\Dbig^{T}\ex-\Dbig^{T}\Bbig\Kbig\Dbig\Dbig^{T}\hat\ex\\
&=&\Abigr\exr-\Bbigr\Kbigr(\exr+\er)\\
&=&[\Abigr-\Bbigr\Kbigr]\exr-\Bbigr\Kbigr\er\,.
\end{eqnarray*}
As for $\er$, the dynamics~\eqref{eqn:bigclosedloop} yields
\begin{eqnarray*}
\er^{+}&=&\Dbig^{T}(\hat\ex^{+}-\ex^{+})\\
&=&\Dbig^{T}(\Abig\hat\ex+\Lbig(\Cbig\ex-\Cbig\hat\ex)-\Abig\ex)\\
&=&[\Dbig^{T}\Abig-\Dbig^{T}\Lbig\Cbig](\hat\ex-\ex)\\
&=&[\Dbig^{T}\Abig\Dbig\Dbig^{T}-\Dbig^{T}\Lbig\Cbig\Dbig\Dbig^{T}](\hat\ex-\ex)\\
&=&[\Dbig^{T}\Abig\Dbig-\Dbig^{T}\Lbig\Cbig\Dbig]\Dbig^{T}(\hat\ex-\ex)\\
&=&[\Abigr-\Lbigr\Cbigr]\er\,.
\end{eqnarray*}
Hence the overall dynamics for the pair $(\exr,\,\er)$ reads
\begin{eqnarray}\label{eqn:reduced}
\left[\begin{array}{c}\exr\\ \er\end{array}\right]^{+}=
\underbrace{\left[\begin{array}{cc}\Abigr-\Bbigr\Kbigr & -\Bbigr\Kbigr\\
0 & \Abigr-\Lbigr\Cbigr\end{array}\right]}_{\displaystyle {\mathbf
\Phi}_{\rm r}}
\left[\begin{array}{c}\exr\\
\er\end{array}\right]\,.
\end{eqnarray}
Next, we show that the block diagonal entries in
\eqref{eqn:reduced} can be made Schur by choosing $\tau_{\rm c}$
and $\tau_{\rm o}$ large enough. To this end, let us define the
following $(q-1)n\times(q-1)n$ matrices.
\begin{eqnarray*}
\theta_{\rm c}(\tau)&=&\Dbig^{T}\Bbig\left[\Bbig^{T}\Abig^{(N_{\rm
c}-1)T}\Dbig[\Dbig^{T}\Rbig\Rbig^{T}\Dbig]^{-1}\Dbig^{T}\Rbig-[e_{N_{\rm
c}}^{T}\otimes I]\ \ \ \Bbig^{T}\Abig^{(N_{\rm
c}-1)T}\Dbig[\Dbig^{T}\Rbig\Rbig^{T}\Dbig]^{-1}\right]\\
&&\qquad\times
[e^{\Lambda\tau}]\times\left[\begin{array}{c}0\\\Dbig^{T}\Abig^{N_{\rm
c}}\end{array}\right]\Dbig\\
\theta_{\rm o}(\tau)&=&\Dbig^{T}\Abig^{N_{\rm
o}}\Dbig\Gamma^{-1}[e^{-\Gamma \tau}]\Dbig^{T}\Abig^{(N_{\rm
o}-1)T}\Cbig^{T}\Cbig\Dbig\,.
\end{eqnarray*}
Now we can write by Lemmas~\ref{lem:initial}, \ref{lem:K}, and
\ref{lem:initial3}
\begin{eqnarray}\label{eqn:schurc}
\Abigr-\Bbigr\Kbigr&=&\Abigr-\Dbig^{T}\Bbig\Kbig\Dbig\nonumber\\
&=&\Abigr-\Dbig^{T}\Bbig\Bbig^{T}\Abig^{(N_{\rm
c}-1)T}\Dbig[\Dbig^{T}\Rbig\Rbig^{T}\Dbig]^{-1}\Dbig^{T}\Abig^{N_{\rm
c}}\Dbig+\theta_{\rm c}(\tau_{\rm c})\nonumber\\
&=&\Abigr-\Bbigr\Bbigr^{T}\Abigr^{(N_{\rm
c}-1)T}[\Dbig^{T}\Rbig\Rbig^{T}\Dbig]^{-1}\Abigr^{N_{\rm
c}}+\theta_{\rm c}(\tau_{\rm c})\nonumber\\
&=&\underbrace{\Abigr-\Bbigr\Bbigr^{T}\Abigr^{(N_{\rm
c}-1)T}\left(\sum_{\ell=0}^{N_{\rm
c}-1}\Abigr^{\ell}\Bbigr\Bbigr^{T}\Abigr^{\ell
T}\right)^{-1}\Abigr^{N_{\rm c}}}_{\displaystyle \Hbig_{\rm
c}}+\,\theta_{\rm c}(\tau_{\rm c})
\end{eqnarray}
where we used $\Dbig^{T}\Rbig=[\Bbigr\ \Abigr\Bbigr\ \cdots\
\Abigr^{N_{\rm c}-1}\Bbigr]$ and $\Abigr^{\ell}=\Dbig^{T}\Abig^{\ell}\Dbig$. Similarly, by
Lemmas~\ref{lem:initial2}, \ref{lem:L}, and \ref{lem:initial3} we obtain
\begin{eqnarray}\label{eqn:schuro}
\Abigr-\Lbigr\Cbigr&=&\Abigr-\Dbig^{T}\Lbig\Cbig\Dbig\nonumber\\
&=&\Abigr-\Dbig^{T}\Abig^{N_{\rm
o}}\Dbig\Gamma^{-1}\Dbig^{T}\Abig^{(N_{\rm
o}-1)T}\Cbig^{T}\Cbig\Dbig+\theta_{\rm o}(\tau_{\rm o})\nonumber\\
&=&\Abigr-\Abigr^{N_{\rm
o}}[\Dbig^{T}\Qbig^{T}\Qbig\Dbig]^{-1}\Abigr^{(N_{\rm
o}-1)T}\Cbigr^{T}\Cbigr+\theta_{\rm o}(\tau_{\rm o})\nonumber\\
&=&\underbrace{\Abigr-\Abigr^{N_{\rm
o}}\left(\sum_{\ell=0}^{N_{\rm o}-1}\Abigr^{\ell
T}\Cbigr^{T}\Cbigr\Abigr^{\ell}\right)^{-1}\Abigr^{(N_{\rm
o}-1)T}\Cbigr^{T}\Cbigr}_{\displaystyle \Hbig_{\rm
o}}+\,\theta_{\rm o}(\tau_{\rm o})
\end{eqnarray}
where we used $\Dbig^{T}\Qbig^{T}=[\Cbigr^{T}\
\Abigr^{T}\Cbigr^{T}\ \cdots\ \Abigr^{(N_{\rm o}-1)T}\Cbigr^{T}]$.

\begin{lemma}\label{lem:schurc}
There exists $\bar\tau>0$ such that the matrix $[\Hbig_{\rm
c}+\theta_{\rm c}(\tau)]$ is Schur for all $\tau>\bar\tau$.
\end{lemma}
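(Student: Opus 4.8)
The plan is to show that $\Hbig_{\rm c}$ is Schur and that the perturbation $\theta_{\rm c}(\tau)$ can be made arbitrarily small in norm by taking $\tau$ large, after which a standard eigenvalue-continuity argument finishes the job. First I would identify $\Hbig_{\rm c}$ with the matrix $H$ of Proposition \ref{prop:key}. Indeed, from \eqref{eqn:schurc} we have
\begin{eqnarray*}
\Hbig_{\rm c}=\Abigr-\Bbigr\Bbigr^{T}\Abigr^{(N_{\rm c}-1)T}\left(\sum_{\ell=0}^{N_{\rm c}-1}\Abigr^{\ell}\Bbigr\Bbigr^{T}\Abigr^{\ell T}\right)^{-1}\Abigr^{N_{\rm c}}\,,
\end{eqnarray*}
which is exactly $H$ with $F=\Abigr$, $G=\Bbigr$, and $N=N_{\rm c}$. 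To invoke Proposition \ref{prop:key} I must verify its rank hypothesis, namely that $[\Bbigr\ \Abigr\Bbigr\ \cdots\ \Abigr^{N_{\rm c}-1}\Bbigr]$ has full rank $(q-1)n$. But this matrix equals $\Dbig^{T}\Rbig$ (as already noted just below \eqref{eqn:schurc}), which is full row rank by Lemma \ref{lem:fullrow}. Hence Proposition \ref{prop:key} applies and $\Hbig_{\rm c}$ is Schur.

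Next I would control the perturbation term. The key structural fact is that $\theta_{\rm c}(\tau)$ depends on $\tau$ only through the factor $e^{\Lambda\tau}$; every other matrix appearing in its definition is a fixed, $\tau$-independent constant. Since $\Lambda$ is Hurwitz by Lemma \ref{lem:hurwitz}, we have $\|e^{\Lambda\tau}\|\to 0$ as $\tau\to\infty$. Writing $\theta_{\rm c}(\tau)=M_{1}\,e^{\Lambda\tau}\,M_{2}$ for appropriate constant matrices $M_{1},M_{2}$ read off from the definition of $\theta_{\rm c}$, I obtain the bound $\|\theta_{\rm c}(\tau)\|\leq\|M_{1}\|\,\|e^{\Lambda\tau}\|\,\|M_{2}\|$, so that $\|\theta_{\rm c}(\tau)\|\to 0$ as $\tau\to\infty$.

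Finally I would combine these two facts. Because $\Hbig_{\rm c}$ is Schur, its spectral radius satisfies $\rho(\Hbig_{\rm c})<1$, so there is a submultiplicative matrix norm $\|\cdot\|_{*}$ (for instance one induced by a suitable similarity transform, via the standard fact that for any $\varepsilon>0$ there is an operator norm with $\|\Hbig_{\rm c}\|_{*}\leq\rho(\Hbig_{\rm c})+\varepsilon$) in which $\|\Hbig_{\rm c}\|_{*}=:r<1$. Choosing $\bar\tau>0$ large enough that $\|\theta_{\rm c}(\tau)\|_{*}<1-r$ for all $\tau>\bar\tau$ (possible since all norms on a finite-dimensional space are equivalent and $\|\theta_{\rm c}(\tau)\|\to 0$), I get $\|\Hbig_{\rm c}+\theta_{\rm c}(\tau)\|_{*}\leq\|\Hbig_{\rm c}\|_{*}+\|\theta_{\rm c}(\tau)\|_{*}<1$, whence $\rho(\Hbig_{\rm c}+\theta_{\rm c}(\tau))<1$ and the perturbed matrix is Schur. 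The main obstacle is the continuity/perturbation step: one cannot bound $\rho$ directly by the spectral radii of the summands, so the care lies in selecting the adapted norm $\|\cdot\|_{*}$ in which $\Hbig_{\rm c}$ is a strict contraction and then using equivalence of norms to transfer the smallness of $\theta_{\rm c}(\tau)$ into that norm.
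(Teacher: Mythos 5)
Your proposal is correct and follows essentially the same route as the paper: Schur-ness of $\Hbig_{\rm c}$ via Proposition~\ref{prop:key} together with the full-row-rank fact from Lemma~\ref{lem:fullrow}, decay of $\theta_{\rm c}(\tau)$ from the Hurwitz property of $\Lambda$ (Lemma~\ref{lem:hurwitz}), and then a perturbation argument. The only cosmetic difference is the last step, where the paper simply invokes continuity of eigenvalues under small perturbations while you make this quantitative with an adapted operator norm in which $\Hbig_{\rm c}$ is a strict contraction; both are standard and equally valid.
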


\begin{proof}
By Lemma~\ref{lem:fullrow} the matrix $[\Bbigr\ \Abigr\Bbigr\
\cdots\ \Abigr^{N_{\rm c}-1}\Bbigr]=\Dbig^{T}\Rbig$ is full row
rank. Then $\Hbig_{\rm c}$ is Schur by Proposition~\ref{prop:key}.
Recall that $\Lambda$ is Hurwitz by Lemma~\ref{lem:hurwitz}.
Therefore $\theta_{\rm c}(\tau)\to 0$ because $e^{\Lambda \tau}\to
0$ as $\tau\to\infty$. Now, since small perturbations on a square
matrix mean small changes on its eigenvalues, we can find
$\varepsilon>0$ such that $[\Hbig_{\rm c}+\theta]$ is Schur for
all $\|\theta\|<\varepsilon$. Then, thanks to $\theta_{\rm
c}(\tau)\to 0$, we can choose some $\bar\tau>0$ such that
$\|\theta_{\rm c}(\tau)\|<\varepsilon$ for all $\tau>\bar\tau$.
Hence the result.
\end{proof}

\begin{lemma}\label{lem:schuro}
There exists $\bar\tau>0$ such that the matrix $[\Hbig_{\rm
o}+\theta_{\rm o}(\tau)]$ is Schur for all $\tau>\bar\tau$.
\end{lemma}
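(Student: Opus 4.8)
The plan is to mirror exactly the proof of Lemma~\ref{lem:schurc}, since the statement of Lemma~\ref{lem:schuro} is its observability dual. The two ingredients are: first, that the unperturbed matrix $\Hbig_{\rm o}$ is Schur; and second, that the perturbation $\theta_{\rm o}(\tau)$ decays to zero as $\tau\to\infty$. Once both are in hand, a continuity-of-eigenvalues argument delivers the existence of the threshold $\bar\tau$.

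For the first ingredient, I would invoke Proposition~\ref{prop:key}, but applied to the \emph{dual} pair. Recall from \eqref{eqn:schuro} that
\begin{eqnarray*}
\Hbig_{\rm o}=\Abigr-\Abigr^{N_{\rm o}}\left(\sum_{\ell=0}^{N_{\rm o}-1}\Abigr^{\ell T}\Cbigr^{T}\Cbigr\Abigr^{\ell}\right)^{-1}\Abigr^{(N_{\rm o}-1)T}\Cbigr^{T}\Cbigr\,.
\end{eqnarray*}
Transposing, one sees that $\Hbig_{\rm o}^{T}$ has precisely the form of the matrix $H$ in Proposition~\ref{prop:key} with the substitutions $F=\Abigr^{T}$ and $G=\Cbigr^{T}$. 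The rank hypothesis of the proposition then becomes ${\rm rank}\,[\Cbigr^{T}\ \Abigr^{T}\Cbigr^{T}\ \cdots\ \Abigr^{(N_{\rm o}-1)T}\Cbigr^{T}]=(q-1)n$, i.e.\ that $\Dbig^{T}\Qbig^{T}$ is full row rank, which is exactly Lemma~\ref{lem:fullrow2}. Hence $\Hbig_{\rm o}^{T}$ is Schur, and since a matrix and its transpose share the same eigenvalues, $\Hbig_{\rm o}$ is Schur as well.

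For the second ingredient, $\theta_{\rm o}(\tau)$ carries the factor $e^{-\Gamma\tau}$, and by Lemma~\ref{lem:hurwitz2} the matrix $-\Gamma$ is Hurwitz, so $e^{-\Gamma\tau}\to 0$ as $\tau\to\infty$; all the other factors in $\theta_{\rm o}(\tau)$ are constant matrices, whence $\theta_{\rm o}(\tau)\to 0$. Finally, since eigenvalues depend continuously on the entries of a square matrix, there is an $\varepsilon>0$ with $[\Hbig_{\rm o}+\theta]$ Schur whenever $\|\theta\|<\varepsilon$; choosing $\bar\tau>0$ so that $\|\theta_{\rm o}(\tau)\|<\varepsilon$ for $\tau>\bar\tau$ completes the argument. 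I do not anticipate any genuine obstacle here—the only point requiring care is the bookkeeping to recognize $\Hbig_{\rm o}^{T}$ as an instance of Proposition~\ref{prop:key}, which is the step the author presumably compresses by citing Lemma~\ref{lem:schurc} as the dual.
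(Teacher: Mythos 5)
Your proof is correct and follows essentially the same route as the paper: invoke Proposition~\ref{prop:key} on the dual data $F=\Abigr^{T}$, $G=\Cbigr^{T}$ using the full-row-rank of $\Dbig^{T}\Qbig^{T}$ from Lemma~\ref{lem:fullrow2} to get that $\Hbig_{\rm o}^{T}$ (hence $\Hbig_{\rm o}$) is Schur, then use Lemma~\ref{lem:hurwitz2} to send $\theta_{\rm o}(\tau)\to 0$ and conclude by continuity of eigenvalues exactly as in Lemma~\ref{lem:schurc}. The only difference is that you spell out the transposition bookkeeping that the paper compresses into ``see the dual result.''
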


\begin{proof}
By Lemma~\ref{lem:fullrow2} the matrix $[\Cbigr^{T}\
\Abigr\Cbigr^{T}\ \cdots\ \Abigr^{(N_{\rm
o}-1)T}\Cbigr^{T}]=\Dbig^{T}\Qbig^{T}$ is full row rank. Then
$\Hbig_{\rm o}^{T}$ is Schur by Proposition~\ref{prop:key},
meaning $\Hbig_{\rm o}$ is also Schur. Lemma~\ref{lem:hurwitz2}
tells us that $-\Gamma$ is Hurwitz. Hence $e^{-\Gamma \tau}\to 0$
and, consequently, $\theta_{\rm o}(\tau)\to 0$ as $\tau\to\infty$.
The rest is like the proof of Lemma~\ref{lem:schurc}.
\end{proof}
\vspace{0.12in}

Our preparations for the proof of Theorem~\ref{thm:main} are now
complete: \vspace{0.12in}

\noindent{\bf Proof of Theorem~\ref{thm:main}.} Consider the
array~\eqref{eqn:array} under the control inputs~\eqref{eqn:uij}
and arbitrary initial conditions
$x_{1}[0],\,x_{2}[0],\,\ldots,\,x_{q}[0]$. Let $\bar\tau_{\rm
c}>0$ be such that $[\Hbig_{\rm c}+\theta_{\rm c}(\tau)]$ is Schur
for all $\tau>\bar\tau_{\rm c}$. Likewise, let $\bar\tau_{\rm
o}>0$ be such that $[\Hbig_{\rm o}+\theta_{\rm o}(\tau)]$ is Schur
for all $\tau>\bar\tau_{\rm o}$. Such $\bar\tau_{\rm c}$ and
$\bar\tau_{\rm o}$ exist thanks, respectively, to
Lemma~\ref{lem:schurc} and Lemma~\ref{lem:schuro}. Suppose now the
parameters $\tau_{\rm c}$ and $\tau_{\rm o}$ in the
algorithm~\eqref{eqn:algorithm} satisfy $\tau_{\rm
c}>\bar\tau_{\rm c}$ and $\tau_{\rm o}>\bar\tau_{\rm o}$. Then, by
\eqref{eqn:schurc} the matrix
$[\Abigr-\Bbigr\Kbigr]$ is Schur. Likewise,
$[\Abigr-\Lbigr\Cbigr]$ is Schur by \eqref{eqn:schuro}. Therefore the system matrix ${\mathbf
\Phi}_{\rm r}$ in \eqref{eqn:reduced} is Schur because it is upper
block triangular with Schur block diagonal entries. This implies
that the solution of the system~\eqref{eqn:reduced} must converge
to the origin regardless of the initial conditions. In particular,
we have $\exr[k]\to 0$ as $k\to\infty$. Then the solution of the
closed-loop system~\eqref{eqn:bigclosedloop} must satisfy
$\|\ex[k]\|_{\setS_{n}}\to 0$ because
$\|\ex\|_{\setS_{n}}=\|\Dbig^{T}\ex\|=\|\exr\|$. Clearly,
$\|\ex[k]\|_{\setS_{n}}\to 0$ means $\|x_{i}[k]-x_{j}[k]\|\to 0$
as $k\to\infty$ for all pairs $(i,\,j)$. Hence the result.
\hfill$\blacksquare$

\section{Conclusion}

In this paper we studied relatively actuated arrays of
discrete-time linear time-invariant systems with incommensurable
coupling parameters. For this general class of arrays we presented
a distributed algorithm that achieved synchronization through
dynamic output feedback. In the case we studied, even though the
array evolved in discrete time, part of the algorithm required
integration in continuous time so that the overall process
remained decentralized. At this point it is not clear how to
construct a purely discrete-time algorithm for a discrete-time
array. As for continuous-time arrays with incommensurable
input/output matrices, the problem of distributed synchronization
under relative actuation seems still to be open.

\bibliographystyle{plain}
\bibliography{references}
\end{document}